\let\mathcal\mathscr
\numberwithin{equation}{section}
\newtheorem{theorem}{Theorem}[section]
\newtheorem{lemma}[theorem]{Lemma}
\newtheorem{corollary}[theorem]{Corollary}
\theoremstyle{definition}
\newtheorem*{ack}{Acknowledgements}
\renewcommand{\d}{\mathrm{d}}
\renewcommand{\phi}{\varphi}
\renewcommand{\rho}{\varrho}
\newcommand{\sumstar}{\sideset{}{^*}\sum}
\newcommand{\card}{\#}
\newcommand{\0}{\mathbf{0}}
\newcommand{\mmod}[1]{\,\,\text{mod}\,\,#1}
\newcommand{\PP}{\mathbb{P}}
\renewcommand{\AA}{\mathbb{A}}
\newcommand{\A}{\mathbf{A}}
\newcommand{\FF}{\mathbb{F}}
\newcommand{\ZZ}{\mathbb{Z}}
\newcommand{\NN}{\mathbb{N}}
\newcommand{\QQ}{\mathbb{Q}}
\newcommand{\RR}{\mathbb{R}}
\newcommand{\N}{\mathbb{N}}
\newcommand{\Q}{\mathbb{Q}}
\newcommand{\Z}{\mathbb{Z}}
\newcommand{\F}{\mathbb{F}}
\newcommand{\cE}{\mathcal{E}}
\newcommand{\cS}{\mathcal{S}}
\newcommand{\cW}{\mathcal{W}}
\newcommand{\cR}{\mathcal{R}}
\newcommand{\cU}{\mathcal{U}}
\newcommand{\cV}{\mathcal{V}}
\newcommand{\cN}{\mathcal{N}}
\newcommand{\cZ}{\mathcal{Z}}
\newcommand{\calE}{\mathcal{E}}
\newcommand{\calM}{\mathcal{M}}
\newcommand{\calN}{\mathcal{N}}
\newcommand{\calR}{\mathcal{R}}
\newcommand{\calV}{\mathcal{V}}
\newcommand{\calW}{\mathcal{W}}
\newcommand{\val}{{\rm val}}
\newcommand{\XX}{\boldsymbol{X}}
\renewcommand{\leq}{\leqslant}
\renewcommand{\geq}{\geqslant}
\renewcommand{\ge}{\geqslant}
\renewcommand{\bar}{\overline}
\newcommand{\x}{\mathbf{x}}
\newcommand{\y}{\mathbf{y}}
\renewcommand{\v}{\mathbf{v}}
\renewcommand{\u}{\mathbf{u}}
\newcommand{\w}{\mathbf{w}}
\newcommand{\p}{\mathbf{p}}
\newcommand{\q}{\mathbf{q}}
\newcommand{\s}{\mathbf{s}}
\newcommand{\bfx}{\mathbf{x}}
\newcommand{\fo}{\mathfrak{o}}
\newcommand{\fF}{\mathfrak{F}}
\newcommand{\grS}{\mathfrak{S}}
\newcommand{\la}{\lambda}
\newcommand{\al}{\alpha}
\newcommand{\del}{\delta}
\newcommand{\Del}{\Delta}
\newcommand{\alp}{\alpha}
\newcommand{\bet}{\beta}
\newcommand{\kap}{\kappa}
\newcommand{\lam}{\lambda}
\newcommand{\ome}{\omega}
\newcommand{\sig}{\sigma}
\newcommand{\vartet}{\vartheta}
\newcommand{\ve}{\varepsilon}
\DeclareMathOperator{\rank}{rank}
\DeclareMathOperator{\tr}{Tr}
\DeclareMathOperator{\Tr}{Tr}
\DeclareMathOperator{\nf}{\mathbf{N}}
\DeclareMathOperator{\Br}{Br}
\renewcommand{\hat}{\widehat}
\newcommand{\str}{\widetilde{{\tr}}}
\newcommand{\Lring}{\mathfrak{o}_L}
\newcommand{\alpmu}{\alp_{m,\u^{(m)}} (x)}
\newcommand{\alpmuhat}{\hat{\alp}_{m,\u^{(m)}} (x)}
\newcommand{\alpo}{\alp^\dagger_{m,\u^{(m)}} (x)}
\newcommand{\betmu}{\bet_{m,\v^{(m)}}(y)}
\newcommand{\omemu}{\omega_m}
\newcommand{\na}{n_1}
\newcommand{\nb}{n_2}
\newcommand{\Ka}{K_1}
\newcommand{\Kb}{K_2}
\newcommand{\Na}{N_1}
\newcommand{\Nb}{N_2}
\newcommand{\Sn}{Sv_0}
\title[Strong approximation]{Strong approximation and a conjecture of Harpaz and Wittenberg}
\author{T.D. Browning}
\address{School of Mathematics\\
University of Bristol\\ Bristol\\ BS8 1TW\\ UK}
\email{t.d.browning@bristol.ac.uk}
\author{D. Schindler} 
\address{Utrecht University\\Hans Freudenthalgebouw\\Budapestlaan 6\\3584 CD Utrecht\\
Netherlands}
\email{d.schindler@uu.nl}
\subjclass[2010]{14G05 (11D57, 11N36, 14D10, 14F22)}
\date{\today}
\begin{document}

\begin{abstract}
We study strong approximation for some algebraic varieties over $\QQ$ which are defined using norm forms. This allows us to confirm a special case of a conjecture due to Harpaz and Wittenberg.
\end{abstract}

\maketitle

\setcounter{tocdepth}{1}
\tableofcontents

\section{Introduction}

This  paper establishes a special case of a conjecture due to 
Harpaz and Wittenberg \cite[Conj.~9.1]{HW}, the resolution of which leads to the following very general result  about the behaviour of rational points on varieties  over $\QQ$  admitting a suitable  fibration.

\begin{theorem}\label{thm:HW} 
Let $X$ be a smooth proper and geometrically irreducible
variety over $\QQ$,  and let $f : X \to \PP_\QQ^1$
be a dominant morphism with rationally connected
geometric generic fibre.  Suppose that $\rank(f)\leq 3$, 
with at least one non-split fibre lying over a rational point of $\PP_\QQ^1$.
Assume that there exists a  Hilbert subset $H\subset \PP^1_\QQ$ such that 
 $X_c(\QQ)$ is dense in $X_c(\A_\QQ)^{\Br(X_c)}$ for every rational point $c$ in $H$. 
Then $X(\QQ)$ is dense in $X(\A_\QQ)^{\Br(X)}$.
\end{theorem}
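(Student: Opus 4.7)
The plan is to follow the descent--fibration strategy of Harpaz and Wittenberg, which reduces Theorem~\ref{thm:HW} to a purely analytic strong approximation statement for an auxiliary variety defined by norm equations; proving that statement is then the main work. Fix an adelic point $(P_v) \in X(\A_\QQ)^{\Br(X)}$ and a finite set $S$ of places at which we wish to approximate. Pushing forward via $f$ gives a Brauer--Manin point of $\PP^1_\QQ$, and the task is to find a rational $c \in H$ whose fibre $X_c$ carries a Brauer--Manin adelic point close to the restrictions $(P_v)_{v \in S}$; the hypothesis on $H$ then produces the desired $\QQ$-point of $X_c \subset X$.

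The vertical Brauer classes of $f$, which are supported on the non-split closed fibres, obstruct a naive choice of $c$. To force the corresponding local invariants to vanish simultaneously, one introduces a norm-type auxiliary variety $Y$: for each non-split closed fibre of $f$ at a closed point $\theta \in \PP^1_\QQ$ one fixes a pseudo-splitting extension $L_\theta/\kappa(\theta)$ and imposes a norm equation of the shape $N_{L_\theta/\kappa(\theta)}(\bxi_\theta) = c_\theta (t-\theta)$, together with linking relations expressing that all these equations share the same parameter $t$. A $\QQ$-point of $Y$ lying close to the prescribed adelic data yields the required $c$. The hypothesis $\rank(f) \leq 3$ caps the total non-splittingness and so bounds the number and degree of the norm equations that must be handled, while the existence of a non-split rational fibre provides the $\QQ$-rational seed needed to initialise the analytic argument.

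The main obstacle is therefore establishing strong approximation off the archimedean place, with Brauer--Manin obstruction, for this norm variety $Y$. I would parametrise solutions by the value $t \in \QQ$, so that the problem becomes one of simultaneously representing a small number of linear polynomials in $t$ (up to controlled constants) by norm forms associated to the extensions $L_\theta/\kappa(\theta)$. This reduces to a counting problem for integer solutions in expanding boxes with prescribed local conditions, which I would attack by a combination of the circle method, Selberg--Delange techniques, and sieve input in the style of Green--Tao on primes in linear patterns, in order to handle the joint representability of several linear forms as norms. The Brauer--Manin obstruction would be matched by arranging the local densities so that only adelic points surviving the obstruction contribute to the main term. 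Achieving the necessary uniformity in the parameters, and controlling the interaction between up to three norm equations when the fields $L_\theta$ are non-abelian, is where the technical difficulty concentrates; it is exactly the constraint $\rank(f) \leq 3$ that keeps this analysis within reach of current analytic technology.
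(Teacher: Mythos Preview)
Your high-level picture is right: the deduction of Theorem~\ref{thm:HW} goes through the Harpaz--Wittenberg machinery, and the substance of the paper is the strong approximation statement for an auxiliary norm variety. But what you have written is a strategy outline, not a proof, and several of the concrete choices you propose diverge from what actually works.

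First, you omit the case distinction that organises the argument. After arranging that $f^{-1}(\infty)$ is split, the non-split locus $M\subset\AA^1_\QQ$ has degree at most $3$ and contains a rational point by hypothesis. Either $M$ consists entirely of rational points, in which case the result is already covered by Matthiesen via \cite[Thm.~9.28]{HW}, or $M$ consists of one rational point together with one closed point of degree $2$. Only this second case is new, and it is precisely the case $n=2$, $k=k_1=\QQ$, $k_2$ quadratic of \cite[Conj.~9.1]{HW}. The reduction to the auxiliary variety then goes through \cite[Cor.~9.10]{HW} and \cite[Cor.~9.24]{HW}, not through an ad hoc construction.

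Second, the auxiliary variety is not a general ``several norm equations in a shared parameter $t$'' as you describe, but the single equation
\[
\tr_{L/\QQ}\big(\delta\,\nf_{K_1/L}(\y)\big)=2\,\nf_{K_2/\QQ}(\w),
\]
with $L$ the quadratic field $k_2$. The relevant open subset $W$ satisfies \emph{plain} strong approximation off any place; there is no Brauer--Manin obstruction to keep track of on $W$, contrary to what you suggest.

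Third, the analytic approach you sketch (Green--Tao, Selberg--Delange) is not the one that succeeds here. The method is the bilinear/large-sieve machinery of Browning--Heath-Brown \cite{BHB}, adapted to allow $K_1\neq K_2$. The genuinely new difficulty, which you do not mention, is that strong approximation (as opposed to weak) forces one to count only those solutions for which $\nf_{K_1/\QQ}(\y)\nf_{K_2/\QQ}(\w)$ is square-free away from a fixed set of primes. Handling this square-freeness via M\"obius and showing that large moduli contribute negligibly---using input from Matthiesen~\cite{M}---is the technical heart of the paper and is absent from your plan.
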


Recall here that the rank of a fibration $f:X\to \PP_\QQ^1$ is defined to be 
the sum of the degrees of the closed points of $\PP_\QQ^1$ above which the fibre of $f$ is not split.
The definition of split fibres can be found in work of  Skorobogatov \cite{ajm}, which is where the notion was originally  introduced to the subject.
The conclusion of Theorem \ref{thm:HW}  is also true when  $\rank(f)\leq 2$ over any number field, 
without any condition on the non-split fibres (see  \cite[Thm.~9.31]{HW} and its footnote). The latter is due to Harari \cite{harari} when $\rank(f)=1$.
When $\rank(f)=2$ and the fibres satisfy weak approximation it follows from 
work of   Colliot-Th\'el\`ene and Skorobogatov \cite{ct-sk}.
Thanks to 
Matthiesen \cite{M} and the work Harpaz and Wittenberg
 \cite[Thm.~9.28]{HW},  
the result is also known to be true for arbitrary values of $\rank(f)$ over $\QQ$, provided that all the non-split fibres lie over rational points of $\PP_\QQ^1$.

Next, 
let $K/\QQ$ be a finite  extension of number fields of degree $n\geq 2$ 
and fix a $\QQ$-basis $\{\omega_1,\dots,\omega_n\}$ for $K$ 
over $\QQ$. 
For any subfield $F\subset K$,  we denote by 
$$
\nf_{K/F}(x_1,\dots,x_n)=
N_{K/F}(x_1\omega_1+\dots+x_n\omega_n)
$$ 
the corresponding norm form, where $N_{K/F}$ is the field norm.
It follows from work of Derenthal, Smeets and Wei \cite[Thm.~2]{DSW}
that the Brauer--Manin obstruction to the Hasse principle or weak approximation is the only obstruction on any smooth proper model $X$ of the affine variety 
\begin{equation} \label{PNZ'}
P(t)=\nf_{K/\QQ}(x_1,\ldots,x_n),
\end{equation}
where  $P(t)$ is an irreducible  quadratic polynomial over $\QQ$.  
The obvious morphism $X\to \PP_\QQ^1$  has rational geometric  generic fibre. 
It has precisely two non-split fibres over $\PP_\QQ^1$, one of which is the fibre at infinity 
and the other lies above the quadratic point defined by $P(t)$. 
Moreover the smooth fibres  over $\PP^1_\QQ(\QQ)$  
all satisfy the property that the Brauer--Manin obstruction is the only obstruction to the Hasse principle or weak approximation by work of Sansuc \cite{sansuc}.
Hence  Theorem \ref{thm:HW} applies to $X$ and may be viewed as a considerable generalisation of \cite[Thm.~2]{DSW}.
For example, it evidently  applies to smooth proper models of the affine varieties  in which the right hand side 
of \eqref{PNZ'}
is replaced by a product of norm forms.

Theorem \ref{thm:HW} will follow from the study of strong approximation for a particular family of varieties defined using norm forms. 
For any algebraic variety $Y$ defined over $\QQ$, we say that 
strong approximation holds for $Y$ off a finite set $S$ of places of $\QQ$ if the image of $Y(\QQ)$ is dense in the space $Y(\A_\QQ^S)$ of ad\`elic points outside $S$.  
%In particular with this definition strong approximation off $S$ holds if  $Y(\A_\QQ)=\emptyset$.
%This is not true!
%It could be that Y(A_Q) is empty while Y(A_Q^S) is not, in which case Y(Q)
%will fail to be dense in Y(A_Q^S).  One solution would be to take the
%convention that strong approximation off S holds whenever Y fails to have
%local points at the places of S.
We will follow the convention that strong 
approximation off $S$ holds whenever $Y$ fails to have local points at the places of $S$.
Studying strong approximation  and the integral Hasse principle on integral models of  affine varieties is generally harder than studying weak approximation and the Hasse principle for rational points on proper models of $Y$.

It is now time to introduce the auxiliary variety $W$ whose arithmetic lies at the heart of  Theorem \ref{thm:HW}.
For $i\in\{1,2\}$,
 let $K_i/\QQ$  be an arbitrary number field of degree $n_i$.
Let $L=\QQ(\sqrt{a})$ 
for any   $a\in \QQ^*\setminus {\QQ^*}^2$.  We henceforth assume that $L\subset K_1$. In particular  $n_1$ is even.
Let $\delta\in L^*$ 
and let  $V\subset \AA_\QQ^{n_1+n_2}$ be the variety given by the equation
\begin{equation}\label{eq:1}
\tr_{L/\QQ}\big(\delta \nf_{K_1/L}(\y)\big)=2\nf_{K_2/\QQ}(\w).
\end{equation}
Let $Z\subset V$ be the codimension two subvariety in which either
$
\nf_{K_1/\QQ}(\y)= \nf_{K_2/\QQ}(\w)=0
$ 
or, if 
one factors $\nf_{K_1/\QQ}(\y)$
(resp.~ $\nf_{K_2/\QQ}(\w)$) over $\bar \QQ$ as a product of linear forms, then two or more 
of the factors vanish at $\y$ (resp.~ at $\w$). 
The auxiliary variety in which   we are interested   is defined to be the  open subset 
$W=V\setminus Z$. We shall prove the following result.

\begin{theorem}\label{thm:1}
Strong approximation holds for $W$ off any non-empty finite set of places. 
\end{theorem}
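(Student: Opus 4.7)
My plan is to prove Theorem~\ref{thm:1} via the Hardy--Littlewood circle method applied to the single equation \eqref{eq:1} defining $V\supset W$. Fix a non-empty set $S$ of places of $\QQ$ and a smooth adelic point $(\y^0, \w^0) \in W(\AA_\QQ^S)$; the task is to produce a rational point of $W$ approximating $(\y^0, \w^0)$ at every place outside $S$. Using an auxiliary place in $S$ to absorb local freedom (either the archimedean box size or the $p$-adic precision), this reduces to a counting problem: for a smooth weight $\omega$ supported in a box of size $B$ at the relevant archimedean place, and for a modulus $q$ encoding the imposed congruence conditions at the finite primes we wish to approximate, we seek an asymptotic formula
\[
N_\omega(B;q) = \grS \cdot \sigma_\infty \cdot B^{n_1+n_2-1} + O\!\left(B^{n_1+n_2-1-\eta}\right)
\]
for the weighted count of integer solutions $(\y,\w)$ of \eqref{eq:1} in the prescribed congruence class modulo $q$. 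Once we establish this with $\grS \cdot \sigma_\infty > 0$, we obtain the required $\QQ$-points on $W$.

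The analysis proceeds in standard fashion, via the exponential sums
\[
S_1(\alpha) = \sum_\y \omega(\y/B)\, e\!\big(\alpha \tr_{L/\QQ}(\delta\nf_{K_1/L}(\y))\big), \quad S_2(\alpha) = \sum_\w \omega(\w/B)\, e\!\big(2\alpha\nf_{K_2/\QQ}(\w)\big),
\]
each restricted to the prescribed congruence class modulo $q$, so that $N_\omega(B;q) = \int_0^1 S_1(\alpha)\overline{S_2(\alpha)}\,d\alpha$, which we then split into major and minor arcs. On the major arcs, standard approximation of $\alpha$ by rationals extracts the singular series $\grS = \prod_p \sigma_p$ and singular integral $\sigma_\infty$; positivity of each $\sigma_v$ follows from the existence of a non-singular local point in $W(\QQ_v)$, guaranteed by $(\y^0, \w^0)$. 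The codimension-two subvariety $Z$ is defined precisely to exclude the degenerate configurations where several linear factors of $\nf_{K_1/\QQ}(\y)$ or $\nf_{K_2/\QQ}(\w)$ vanish simultaneously, and this removal is what allows $\grS$ to converge absolutely and $\sigma_\infty$ to be positive.

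The central difficulty is the minor-arc bound
\[
\int_{\mathfrak m} |S_1(\alpha)\, S_2(\alpha)|\,d\alpha \ll B^{n_1+n_2-1-\eta}.
\]
For $S_2$ one can draw on the classical arithmetic of norm forms: parametrising by integral ideals of $\fo_{K_2}$ and combining Dirichlet series estimates with Kloosterman-type bounds yields the required savings. The harder sum is $S_1$, where $\tr_{L/\QQ}(\delta \nf_{K_1/L}(\y))$ is not itself a norm form but a $\QQ$-linear combination of the two partial norms making up $\nf_{K_1/L}(\y)$; however, since $\nf_{K_1/L}$ is itself a norm form over $L$, one can descend the sum to integral ideals of $\fo_{K_1}$ twisted by $L$-adic characters and re-use norm-form technology over $L$. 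The principal technical obstacle will be carrying out this descent with enough uniformity to beat the main-term size, especially in the small regions near the excised set $Z$; the precise definition of $Z$ was chosen so that both the minor-arc analysis and the convergence of the singular series go through cleanly.
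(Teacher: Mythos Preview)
Your plan has a fundamental gap: the direct circle method does not yield the minor-arc saving you claim for norm forms. The form $\tr_{L/\QQ}(\delta\nf_{K_1/L}(\y))$ factors over $\bar\QQ$ as a product of $n_1/2$ linear forms, so its singular locus has codimension only $2$ in $\AA^{n_1}$; the same holds for $\nf_{K_2/\QQ}(\w)$. Birch-type Weyl differencing therefore gives essentially nothing, and there is no known Kloosterman-type bound or ideal-theoretic parametrisation that produces a power saving for $\int_{\mathfrak m}|S_1 S_2|$ uniformly in $n_1$ and $n_2$. This is precisely why the paper (following \cite{BHB}) does \emph{not} attack \eqref{eq:1} directly but instead lifts to the higher-dimensional equation \eqref{eq:UT} by introducing an auxiliary variable $\v$. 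Writing $x=\delta\nf_{K_1/L}(\u)$ and $y=(\nf_{K_1/L}(\v)D_L)^\sigma$, the equation becomes the skew-trace condition $\str(x,y)=c\nf_{K_2/\QQ}(\w)$, which is \emph{bilinear} in $(x,y)$; the saving then comes not from a minor-arc bound but from a large-sieve inequality (Lemma~\ref{lem14}) fed into a bilinear-sum estimate (Lemma~\ref{lem16}).

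You have also inverted the role of $Z$. Removing $Z$ does not make the singular series converge; rather, the whole point of strong approximation on $W=V\setminus Z$ is to produce integral points that provably lie outside $Z$. The paper enforces this by imposing the arithmetic condition $\mu_S^2(\nf_{K_1/\QQ}(\y)\nf_{K_2/\QQ}(\w))=1$, which guarantees \eqref{eq:sq-free} and hence membership in $\cW^\circ(\ZZ_v)\subset\cW(\ZZ_v)$ for all $v\notin S\cup\{v_0\}$. Handling this square-freeness---opening it via M\"obius and showing that the resulting sums over $d,e,f,k$ can be truncated to small moduli using the estimates of Lemma~\ref{lem:M}---is the main new technical input of the paper beyond \cite{BHB}, and it is entirely absent from your sketch.
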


Consider momentarily the special case $K_1=K_2$. Then 
the variety $V$ in \eqref{eq:1} first arose in work of Browning and Heath-Brown \cite{BHB} in their pioneering  investigation of the Hasse principle and weak approximation for \eqref{PNZ'}.
Our variety    $W$ is a smooth open subset of $V$ and it contains the variety \cite[Eq.~(1.8)]{BHB} as a dense open subset.  In particular it follows from \cite[Thm.~2]{BHB} that $W$  satisfies the Hasse principle and weak approximation when $K_1=K_2$. 
Although our strategy to prove Theorem \ref{thm:1} is inspired by the work of Browning and Heath-Brown \cite{BHB}, it involves a number of new difficulties and ideas on the analytic side.
While there is little difficulty in  handling  $K_1\neq K_2$ in 
\eqref{eq:1}, one serious obstacle arises  from an extra ``square-freeness" condition that occurs  when  dealing with strong approximation for the particular open set $W\subset V$. In a different direction one is faced with the additional challenge of a sum defining the singular series  that is not a priori absolutely convergent. The process of completing and then interpreting the singular series is only achieved through a delicate analysis of local counting functions. 

The deduction of Theorem \ref{thm:HW} from Theorem \ref{thm:1}  is carried out in \S \ref{s:deduce}. 
The remaining sections are concerned with the proof of Theorem \ref{thm:1}, beginning with \S \ref{sec2}, where we relate the statement of the theorem to a suitable counting problem.
In \S \ref{s:4} we lay down the necessary tools to handle the square-freeness condition that occurs  in our work, and for which we build on the work of Matthiesen \cite{M} mentioned above. 
Finally, in \S \ref{s:6}  and \S \ref{s:7} the main term of our counting function is analysed and shown to satisfy the properties required for the conclusion of Theorem \ref{thm:1}.

\begin{ack}
The authors are very grateful to Olivier Wittenberg for numerous helpful comments and for setting this project in motion by asking for a proof of  Theorem \ref{thm:1}.
While working on this paper the 
first author was supported by {\em ERC grant} \texttt{306457} and the second author by the {\em NSF} under agreement No. \texttt{DMS-1128155}.
The authors are also grateful to the anonymous referee for numerous helpful comments.
\end{ack} 

\section{Theorem \ref{thm:1} implies Theorem \ref{thm:HW}}\label{s:deduce}

Our starting point is the  construction of $W$ outlined at the start of \cite[\S 9.2.2]{HW},
with data $n=2$, $k=k_1=\QQ$ and  $k_2$ a quadratic extension of $\QQ$.
On carrying out a non-singular linear change of variables on $(\lambda,\mu)$, one may clearly assume that   $a_1=0$ and  that $a_2$ is the square root of a rational
number. In this way we arrive at \eqref{eq:1} with $\delta=b_1b_2^{-1}$.
But then it is straightforward to confirm that the following result is a direct consequence of Theorem~\ref{thm:1} and \cite[Cor.~9.10]{HW}.

\begin{corollary}\label{cor:HW}
Conjecture 9.1 in \cite{HW} holds when $n=2$, $k=k_1=\QQ$ and $k_2$ is a quadratic extension of $\QQ$.
\end{corollary}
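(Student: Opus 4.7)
The plan is essentially a formal deduction from Theorem~\ref{thm:1} together with \cite[Cor.~9.10]{HW}; the bulk of the work is translating between notations.

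First I would write out explicitly the variety constructed at the start of \cite[\S 9.2.2]{HW} for the data $n=2$, $k=k_1=\QQ$ and $k_2 = \QQ(\sqrt{a})$. That construction produces a variety depending on parameters $(\lambda,\mu) \in \AA_\QQ^2$ and on tuples $(a_i,b_i)$ for $i \in \{1,2\}$, and the crucial observation is that the $a_i$ enter the defining equation only through linear forms in $(\lambda,\mu)$. A non-singular linear change of variables on $(\lambda,\mu)$ therefore reduces us to the case $a_1 = 0$ and $a_2$ equal to a square root of a rational number. Setting $L = \QQ(\sqrt{a})$ and $\delta = b_1 b_2^{-1}$, one then finds that the resulting affine equation is precisely \eqref{eq:1}.

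Second, I would verify that the open subset of the resulting variety on which one needs strong approximation coincides with our $W = V \setminus Z$. Concretely, one must check that the exceptional locus singled out in the Harpaz--Wittenberg construction --- where the two norms $\nf_{K_1/\QQ}(\y)$ and $\nf_{K_2/\QQ}(\w)$ simultaneously degenerate --- is captured (at least up to a set of codimension greater than one, which is harmless for strong approximation) by the codimension-two subvariety $Z$ defined before the statement of Theorem~\ref{thm:1}.

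Finally, I would feed Theorem~\ref{thm:1} into \cite[Cor.~9.10]{HW}. The hypotheses of that corollary reduce exactly to strong approximation for the auxiliary variety off a non-empty finite set of places, which is precisely the content of Theorem~\ref{thm:1}; the corollary then yields \cite[Conj.~9.1]{HW} in the asserted case.

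The main obstacle, if any, is bookkeeping: one has to track carefully the change of variables on $(\lambda,\mu)$ and confirm that the Brauer--Manin and rational-point data attached to the construction are preserved under this normalisation. Given the explicit guidance at the start of \S\ref{s:deduce} --- which already records the final form $\delta = b_1 b_2^{-1}$ --- this amounts to a routine verification.
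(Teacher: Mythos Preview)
Your proposal is correct and follows essentially the same approach as the paper: reduce the Harpaz--Wittenberg construction to the normal form \eqref{eq:1} via the linear change of variables on $(\lambda,\mu)$ (yielding $a_1=0$, $a_2$ a square root of a rational, and $\delta=b_1b_2^{-1}$), then invoke Theorem~\ref{thm:1} together with \cite[Cor.~9.10]{HW}. The paper's treatment is terser --- it simply asserts that the corollary is a ``direct consequence'' once the normalisation is carried out --- but your added remarks on matching the open subset $W$ and tracking the bookkeeping are exactly the routine verifications the paper leaves implicit.
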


We may now prove Theorem \ref{thm:HW}.  
Let  $f : X \to \PP_\QQ^1$ be as in the statement of the theorem. 
In particular the hypotheses (1), (2) and (4) of \cite[Cor.~9.23]{HW} are met (cf. the proof of \cite[Cor.~9.25]{HW}).
After a change of coordinates we may assume that the fibre $f^{-1}(\infty)$ is split.
Let $M\subset \AA_\QQ^1$ be the finite closed subset containing the points that have a  non-split fibre. Then the hypotheses of Theorem \ref{thm:HW} imply that $M$ contains at least one rational point. If all of the 
points of $M$ are rational then the result is a special case of a theorem of  Matthiesen \cite{M}, as recorded in \cite[Thm.~9.28]{HW}. Alternatively, we may suppose that $M$ consists of a rational point and a point defined over a quadratic extension of $\QQ$.
In this case the statement of Theorem \ref{thm:HW} is found to be  a straightforward consequence of Corollary \ref{cor:HW} and 
\cite[Cor.~9.24]{HW}.

\section{From strong approximation to counting}\label{sec2}

Let $W\subset \AA_\QQ^{n_1+n_2}$ be the open subset of the variety $V$ in \eqref{eq:1}, as defined in the introduction.  
According to  \cite[Prop.~2.2]{ct-xu}, 
in order to prove Theorem~\ref{thm:1} it will suffice to show that the variety $W$ satisfies strong approximation off an arbitrary  place 
$v_0$ of $\QQ$.
Let $\Omega$ denote the set of places of $\QQ$ and write $\x=(\y,\w)$ for the vector of variables appearing in the definition of $W$.  
We fix an integral  model $\cV$ for $V$, which is obtained by clearing denominators from 
\eqref{eq:1}. 
This leads to an equation 
\begin{equation}\label{eq:1'}
\tr_{L/\QQ}\big(\delta \nf_{K_1/L}(\y)\big)=c\nf_{K_2/\QQ}(\w),
\end{equation}
with $a\in \ZZ$ square-free,  $\delta\in \fo_L\setminus \{0\}$ 
and $c\in 2\ZZ\setminus \{0\}$. Moreover, we may assume that 
for $i\in \{1,2\}$ the norm forms are defined using 
a $\ZZ$-basis 
  $\{\omega_1^{(i)},\dots,\omega_{n_i}^{(i)}\}$
for $\fo_{K_i}$,    with $\omega_1^{(i)}=1$.
We let $\cZ$ be the scheme-theoretic closure of $Z$ in $\cV$ and put $\cW=\cV\setminus \cZ$.

 For strong approximation off $v_0$ on $W$ we must show the following: for any finite set of places $S\subset \Omega\setminus \{v_0\}$, any $(\x_v)\in W(\A_\QQ)$ with $\x_v\in \cW(\ZZ_v)$ for all $v\not\in S\cup \{v_0\}$, there exists a point 
$\x\in W(\QQ)$ with $\x\in \cW(\ZZ_{v})$ for all $v\not\in S\cup \{v_0\}$, such that $\x$ is arbitrarily close to $\x_v$ for all $v\in S$.  
Rather than asking that $\x\in \cW(\ZZ_v)$, for all 
$v\not\in S\cup \{v_0\}$,  we shall demand that 
$\x\in \cW^\circ(\ZZ_v)$ for all 
$v\not\in S\cup \{v_0\}$,
where $\cW^\circ(\ZZ_v)$ is the set of  points $\x=(\y,\w)\in \cV(\ZZ_v)$ 
%Here, for any finite  place $v\in \Omega$, 
%elements of 
%$\cW^\circ(\ZZ_v)$ correspond to points $\x=(\y,\w)\in \cV(\ZZ_v)$ 
such that 
\begin{equation}\label{eq:sq-free}
\begin{split}
\min\{\val_v(\nf_{K_1/\QQ}(\y)), \val_v(\nf_{K_2/\QQ}(\w))\}&=0,\\
\max\{\val_v(\nf_{K_1/\QQ}(\y)), \val_v(\nf_{K_2/\QQ}(\w))\}&\leq 1.
\end{split}
\end{equation}
This is clearly stronger than is strictly necessary,  but as  it turns out,  it is easier to handle  within the confines of our analytic  arguments.

Let $S\subset \Omega\setminus \{v_0\}$ be a finite set  of places and  let $(\x_v)\in W(\A_\QQ)$, with $\x_v\in \cW(\ZZ_v)$ for all $v\not\in S\cup \{\infty, v_0\}$.
It will be convenient to put $S_f=S\setminus \{\infty\}$ for the set of finite places in $S$. 
There are now two cases to consider, depending on whether or not $v_0$ is a finite place.  Suppose first that $v_0=\infty$. Then $S=S_f$ and 
we must find 
$\x\in W(\QQ)$ with $\x\in \cW^\circ(\ZZ_{v})$, for all $v\not\in S_f\cup \{\infty\}$, such that $\x$ is arbitrarily close to $\x_v$ for all $v\in S_f$.
Alternatively, suppose  that $v_0$ is a finite place. 
Without loss of generality  we may assume 
that $S$  contains the infinite place and we put $S_f=S\setminus \{\infty\}$ as before.
In this case we must find 
$\x\in W(\QQ)$ with $\x\in \cW^\circ(\ZZ_{v})$, for all $v\not\in S\cup \{v_0\}$, such that $\x$ is arbitrarily close to $\x_v$, for all $v\in S=S_f\cup\{\infty\}$.
Thus, when $v_0=\infty$ we only have to approximate at a finite collection of finite local places $S_f$, but when $v_0$ is finite we also have to approximate at the real place.

Let $C\in \ZZ$ with $C^{-1}\in \ZZ_{S_f}$ (i.e. all prime factors of $C$ lie in $S_f$) be chosen so that 
$\x_v'=(C^{2n_2}\y_v,C^{n_1}\w_v)\in \ZZ_v^{n_1+n_2}$ for all $v\in S_f$. The change of 
variables that replaces $\x=(\y,\w)$ by $(C^{2n_2}\y,C^{n_1}\w)$ clearly maps $(\x_v)\in W(\A_\QQ)$ to 
$(\x_v')\in W(\A_\QQ)$,  with $\x_v'\in \cW(\ZZ_v)$ for all 
$v\in \Omega\setminus \{v_0,\infty\}$.
By the Chinese remainder theorem we can find $\x^{(M)}\in\ZZ^{n_1+n_2}$ arbitrarily close to $\x_v'$ for all   $v\in S_f$.
We now seek a point $\x'=(\y',\w')\in W(\QQ)\cap \cV(\ZZ)$ 
which is very close to $\x^{(M)}$ in the $v$-adic topology 
for all $v\in S_f$. We further require that $\x'\in \cW^\circ(\ZZ_v)$ for all $v\not\in S\cup\{\infty,v_0\}$. The first condition  translates into the conditions
$$
\y'\equiv \y^{(M)} \bmod{M}, \quad 
\w'\equiv \w^{(M)} \bmod{M},
$$
for a suitable positive integer $M$ built from the primes in $S_f$. The second condition \eqref{eq:sq-free}
can be written
$$
\mu_S^2\left(\nf_{K_1/\QQ}(\y')\nf_{K_2/\QQ}(\w')\right)=1,
$$
where 
for any integer $k$
we set
\begin{equation}\label{eq:heron}
\mu_S^2(k)
:=
\begin{cases}
1 &\mbox{if $p^2\nmid k$ for all primes $p\not\in S\cup \{v_0\}$, }\\
0 &\mbox{otherwise}.
\end{cases}
\end{equation}

Once we've found such a vector $\x'$, then this is also very close to $\x_v'\in \cV(\ZZ_v)$ for all $v\in S_f$, and then 
$$
\x=(C^{-{2n_2}}\y',C^{-{n_1}}\w')\in W(\QQ)\cap \cW(\ZZ_{S_f\cup\{v_0\}})
$$
is very close to  $\x_v$ for all $v\in S_f$, with 
$\x\in \cW^\circ(\ZZ_v)$ for all $v\not\in S\cup\{v_0\}$.
This will completely answer strong approximation off the infinite place. 
Let $\ve>0$ be arbitrary. 
Whether or not  $v_0$ is a finite place,   we will further demand that our  rational point 
$\x'\in W(\QQ)\cap \cV(\ZZ)$ satisfies
$$
|\y'/Y-\y_\infty|<\ve , \quad  |\w'/W-\w_\infty|<\ve , 
$$
for suitable parameters $Y, W>0$.  
Define 
$$
 P(k):=\{k^j: j\geq 0\}
$$ 
for the set of  powers of a positive integer $k$.  
If  $v_0$ is a finite place and 
 $p_0$ is the prime corresponding to $v_0$ then we will  take
$Y,W\in P(p_0^{\phi(M)})$ such that $Y^{n_1}=W^{2n_2}$. 
Then it is clear that $\x''=(\y'/Y,\w'/W)$ will satisfy the constraints required to deduce strong approximation off $v_0$. 
If  $v_0=\infty$ then the vector $\x'$ is sufficient to deduce strong approximation off infinity and 
we may allow $Y,W$ to be arbitrary positive real numbers such that 
$Y^{n_1}=W^{2n_2}$.

To summarise our argument so far, let $v_0$ be a fixed place of $\QQ$ and let $S\subset \Omega\setminus \{v_0\}$ be a finite set  of places.
There is no loss of generality in assuming that $S\cup \{v_0\}$ contains the primes dividing $ac N_{L/\QQ}(\delta)$, 
together with the primes which ramify in $K_1$ or $K_2$. We may now draw the following conclusion.

\begin{lemma}\label{lem:find-yw} 
Assume that $(\x_v)\in W(\A_\QQ)$
Then there exists  
$M\in \NN$ and 
a solution $(\y^{(M)},\w^{(M)})$ of \eqref{eq:1'} 
over $\ZZ/M\ZZ$, with    $ (\y^{(M)},\w^{(M)})\equiv (\y_p,\w_p) \bmod{M}$ for any prime $p\in  S_f$,
together with a point   $(\y^{(\RR)},\w^{(\RR)})\in W(\RR)$,  having the following property.
For any $\ve>0$ and all large enough values of
$W,Y\in \NN$ such that $Y^{n_1}=W^{2n_2}$, 
suppose there is a point 
$(\y, \w)\in \cV(\ZZ)$ satisfying 
\begin{align*}
\mu_S^2\left(\nf_{K_1/\QQ}(\y)\nf_{K_2/\QQ}(\w)\right)=1,\\
\y\equiv \y^{(M)} \bmod{M}, \quad 
\w\equiv \w^{(M)} \bmod{M},
\end{align*}
and
\begin{equation}\label{eq:size2}
\begin{split}
|\y-Y\y^{(\RR)}|<\ve Y, \quad 
|\w-W\w^{(\RR)}|<\ve W.
\end{split}
\end{equation}
Then there exists a point 
$\x\in W(\QQ)$ with $\x\in \cW(\ZZ_{v})$ for all $v\not\in S\cup \{v_0\}$, such that $\x$ is arbitrarily close to $\x_v$ for all $v\in S$.  
\end{lemma}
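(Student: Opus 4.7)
The lemma packages the reduction developed in the preceding discussion, so my plan is to organize that argument into a single clean implication. The steps break naturally into three: construct the data $(M, (\y^{(M)},\w^{(M)}), (\y^{(\RR)},\w^{(\RR)}))$ from the adelic input $(\x_v)$; verify that any $(\y,\w)\in\cV(\ZZ)$ meeting the hypotheses lies in $\cW^\circ(\ZZ_v)$ for every $v\notin S\cup\{v_0\}$ and approximates $\x_v'$ for every $v\in S_f$; and then rescale to produce the desired point $\x\in W(\QQ)$.

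For the first step, I would choose $C\in\ZZ$ with all prime factors in $S_f$ so that $\x_v':=(C^{2n_2}\y_v,C^{n_1}\w_v)\in\ZZ_v^{n_1+n_2}$ for every $v\in S_f$; this rescaling respects \eqref{eq:1'} because the left hand side is homogeneous of degree $n_1/2$ in $\y$ and the right hand side of degree $n_2$ in $\w$, so both sides pick up a factor $C^{n_1n_2}$. The same rescaling produces a real point $(\y^{(\RR)},\w^{(\RR)}):=(C^{2n_2}\y_\infty,C^{n_1}\w_\infty)\in W(\RR)$. By the Chinese remainder theorem, find $(\y^{(M)},\w^{(M)})\in\ZZ^{n_1+n_2}$ congruent to $\x_v'$ modulo a sufficiently high power of each $p_v$, $v\in S_f$; let $M$ be the product of these prime powers, taken large enough for any prescribed $v$-adic accuracy. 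The local solutions $(\y_p,\w_p)\in\cW(\ZZ_p)$ promised by the lemma are then just the scaled local data $\x_v'$.

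Now suppose $(\y,\w)\in\cV(\ZZ)$ satisfies the hypotheses. The congruence modulo $M$ forces $(\y,\w)$ to be $v$-adically close to $\x_v'$ for every $v\in S_f$, which in particular puts $(\y,\w)\in\cW(\ZZ_v)$ at those places. For $v\notin S\cup\{v_0\}$, the standing assumption that $S\cup\{v_0\}$ contains all primes dividing $2acN_{L/\QQ}(\delta)$ and all primes ramifying in $K_1$ or $K_2$, combined with $\mu_S^2(\nf_{K_1/\QQ}(\y)\nf_{K_2/\QQ}(\w))=1$ and \eqref{eq:heron}, is precisely the condition \eqref{eq:sq-free} defining $\cW^\circ(\ZZ_v)\subset\cW(\ZZ_v)$. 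This is why the squarefree input is the correct analytic translation of $\cW$-integrality at these places.

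Finally, I would set $\x:=(C^{-2n_2}Y^{-1}\y,\,C^{-n_1}W^{-1}\w)\in W(\QQ)$; the relation $Y^{n_1}=W^{2n_2}$ guarantees that $\x$ still satisfies \eqref{eq:1'}, the bound \eqref{eq:size2} together with the choice $(\y^{(\RR)},\w^{(\RR)})=(C^{2n_2}\y_\infty,C^{n_1}\w_\infty)$ makes $\x$ arbitrarily close to $\x_\infty$, and the extra factors $Y^{-1},W^{-1}$ are $v$-adic units at $v\in S_f$, so approximation at $S_f$ survives. The only genuine subtlety is the case split for $v_0$: if $v_0=\infty$, any large $Y,W\in\NN$ with $Y^{n_1}=W^{2n_2}$ suffice; if $v_0$ corresponds to a finite prime $p_0$, then $p_0\notin S_f$ and one is forced to take $Y,W\in P(p_0^{\phi(M)})$, so that the denominators produced by the division are supported on $\{v_0\}$ and the congruence modulo $M$ is preserved via $p_0^{\phi(M)}\equiv 1\bmod M$. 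No single step is deep; the main obstacle is the bookkeeping between the two cases and verifying that the final integrality really does hold outside $S\cup\{v_0\}$ after the combined rescaling by $C$ and by $Y,W$.
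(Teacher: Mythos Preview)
Your reconstruction follows the paper's own argument almost verbatim: the scaling by $C$, the use of the Chinese remainder theorem to produce $(\y^{(M)},\w^{(M)})$, the identification of $\mu_S^2(\cdots)=1$ with the condition \eqref{eq:sq-free}, and the case split on $v_0$ are all exactly as in the discussion preceding the lemma. Your choice $(\y^{(\RR)},\w^{(\RR)})=(C^{2n_2}\y_\infty,C^{n_1}\w_\infty)$ differs cosmetically from the paper's $\y^{(\RR)}=\y_\infty$, but either normalisation works.

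There is, however, one genuine slip in your final step. You declare the output point uniformly as
\[
\x=(C^{-2n_2}Y^{-1}\y,\; C^{-n_1}W^{-1}\w),
\]
and then say that when $v_0=\infty$ ``any large $Y,W\in\NN$ with $Y^{n_1}=W^{2n_2}$ suffice''. But with arbitrary integers $Y,W$ the factors $Y^{-1},W^{-1}$ introduce denominators at primes dividing $YW$, which need not lie in $S_f\cup\{v_0\}=S_f\cup\{\infty\}$; your $\x$ would then fail to lie in $\cW(\ZZ_v)$ at those primes. The paper avoids this by treating the two cases genuinely differently: when $v_0=\infty$, real approximation is not required at all, so one simply outputs $\x=(C^{-2n_2}\y,\,C^{-n_1}\w)$ without any division by $Y,W$; the archimedean constraint \eqref{eq:size2} is retained only to have a uniform framework for the subsequent counting, not to produce the rational point. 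Once you make that adjustment, your argument is complete and coincides with the paper's.
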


This result shows that in order to prove that 
 $W$ satisfies strong approximation off $v_0$ it suffices find $(\y,\w)\in \cV(\ZZ)$ satisfying the constraints of the lemma.

Since $(\y^{(\RR)},\w^{(\RR)})\in W(\RR)$ we must have 
$\nabla\nf_{K_1/\QQ}(\y^{(\RR)})\neq \0$. 
In particular $\y^{(\RR)}\neq \0$.
For technical reasons it will be convenient to move $\w^{(\RR)}$ in \eqref{eq:1'} very slightly, and make a corresponding adjustment in $\y^{(\RR)}$ to compensate, in order to ensure that we also have  $\w^{(\RR)}\neq \0$.
In particular, on choosing
$\ve$ sufficiently small, we can make sure that  that $(\y,\w)\neq (\0,\0)$  when \eqref{eq:size2} holds.

We introduce additional bilinear structure by working  
with a  variety of higher dimension. 
We'll  proceed by searching for suitably localised solutions 
$$
(\u, \v, \w)\in \ZZ^{n_1}\times\ZZ^{n_1}\times\ZZ^{n_2}
$$ 
to the Diophantine equation
\begin{equation}\label{eq:UT} 
\tr_{L/\QQ}\big(\delta \nf_{K_1/L}(\u)\nf_{K_1/L}(\v)\big)=
c\nf_{K_2/\QQ}(\w).
\end{equation}
Let 
$$
\u^{(M)}=\y^{(M)}, \quad 
\u^{(\infty)}=\y^{(\RR)} \quad \text{ and }
\quad \v^{(M)}=\v^{(\RR)}=(1,0,\dots,0).
$$
Let $W,U,V\in P(p_0^{\phi(M)})$ such that $(UV)^{n_1}=W^{2n_2}$.
We suppose that we have a solution $(\u,\v,\w)\in \ZZ^{2n_1+n_2}$ of \eqref{eq:UT} which satisfies 
\begin{equation}\label{eq:sq-1}
\mu_S^2\left(\nf_{K_1/\QQ}(\u)\nf_{K_1/\QQ}(\v)\nf_{K_2/\QQ}(\w)\right)=1
\end{equation}
and 
$$
\u\equiv \u^{(M)} \bmod{M}, \quad 
\v\equiv \v^{(M)} \bmod{M}, \quad 
\w\equiv \w^{(M)} \bmod{M},
$$
and
$$
|\u-U\u^{(\RR)}|<\ve U, \quad 
|\v-V\v^{(\RR)}|<\ve V, \quad 
|\w-W\w^{(\RR)}|<\ve W.
$$
Then if $\y\in \ZZ^{n_1}$ is the vector corresponding to 
 $(\sum_{i} u_i \omega_i^{(1)})
(\sum_{i} v_i \omega_i^{(1)})$, when multiplied out and expressed in terms of the integral basis,
 it is easy to check that the vector $(\y,\w)\in \ZZ^{n_1+n_2}$ will be a solution of \eqref{eq:1'} satisfying the conditions of Lemma 
\ref{lem:find-yw} with $Y=UV$.

We employ the notation of 
``skew-trace''  that was introduced in \cite{BHB}.
Thus, if $\sigma$ is the non-trivial automorphism of $L$
and  $\{1,\tau\}$ is a $\ZZ$-basis for $\fo_L$, we put
$
\str(x,y):=\tr_{L/\QQ}(xy^\sigma D_L^{-1}),
$
for $x,y\in L$, where $D_L:=\tau-\tau^{\sigma}$.
%Let $\sigma$ denote the non-trivial automorphism of $L$
%and suppose that $\{1,\tau\}$ is a $\ZZ$-basis for $\fo_L$, and hence
%also a $\QQ$-basis for $L$. 
%It will be convenient to replace the
%trace $\tr_{L/\QQ}$ by a ``skew-trace'' 
%$$
%\str(x,y):=\tr_{L/\QQ}(xy^\sigma D_L^{-1})
%$$
%for $x,y\in L$, where $D_L:=\tau-\tau^{\sigma}$.
On writing $x=\delta \nf_{K_1/L}(\u)$ and 
$y=\left(\nf_{K_1/L}(\v)D_L \right)^\sigma$ 
our   equation \eqref{eq:UT} becomes
$$
\str(x,y)=c\nf_{K_2/\QQ}(\w),
$$
whereas  \eqref{eq:sq-1} becomes
$
\mu_S^2(N_{L/\QQ}(xy)\str(x,y))=1,
$
since $S$ contains all of the places which divide 
$cN_{L/\QQ}(\delta)$
or which ramify in $L$.

Next, for $i=1,2$, we let $\fF^{(i)}$ be a fundamental domain of $\fo_{K_i}/U_{K_i}^+$, where $U_{K_i}^+$ is the group of units of $\fo_{K_i}$ of norm one.  It will be convenient to set 
\begin{equation}\label{eq:domain}
\fF_X^{(i)}:=\{v\in \fF^{(i)}: |N_{K_i/\QQ}(v)|\leq X\},
\end{equation}
for any $X\geq 1$. We will abuse notation and write $\u\in \fF^{(i)}$ to mean that the point  
$u_1\omega_1^{(i)}+\dots+ u_{n_i}\omega_{n_i}^{(i)}$ belongs to the 
region $\fF^{(i)}$. Note that any vector $\x\in \RR^{n_i}\cap \fF^{(i)}$ such that $|\x|\leq X$ automatically belongs to 
$\fF_{cX^{n_i}}^{(i)}$ for an appropriate constant $c>0$ depending only on $K_i$.

We are now ready to specify the sets over which we will sum.  
Let $G$ be a further parameter, tending to
infinity with $V$. We then define the regions
\begin{align*}
\cU&:=\left\{\u \in \RR^{n_1}\cap \fF^{(1)}: |\u-U\u^{(\RR)}|<G^{-1}U
\right\},\\ 
\cV&:=\left\{\v \in \RR^{n_1}\cap \fF^{(1)}: 
|\v-V\v^{(\RR)}|<G^{-1}V
\right\},\\
\cW&:=\{\w \in \RR^{n_2}\cap \fF^{(2)}: |\w-W\w^{(\RR)}|<G^{-1}W\},
\end{align*}
where we recall that $(UV)^{n_1}=W^{2n_2}$. 
In truth the definition of $\cU$ should involve a constraint of the form
$\max_{1\leq i \leq   n_1}|L_i(\u)-UL_i(\u^{(\RR)})|<G^{-1}U$ for suitable independent linear forms 
$L_1,\dots,L_{n_1}$  (cf. \cite[Eq.~(3.14)]{BHB})
that are constructed for the sole purpose of proving  \cite[Lemma 9]{BHB}. Since we will later use the latter result as a ``black box'', we have decided to ease notation by working under the assumption that $L_i(\u)=u_i$ for $1\leq i\leq n_1$.

For technical convenience we introduce an additional restriction on the values of $\v$ that we consider. If we write $N_{K_1/L}(\v)= \Na(\v) + \Nb(\v)\tau$, then we impose the condition that
\begin{equation}\label{gcdv}
\gcd (\Na (\v),\Nb (\v))=1.
\end{equation}
Let 
\begin{align}
  \label{eq:AL}
\al(x)&:=
\#
\left\{
\u\in \cU\cap\ZZ^{n_1}: 
\begin{array}{l}
\u\equiv \u^{(M)} \bmod{M},~ \delta \nf_{K_1/L}(\u)=x
\end{array}
\right\},\\
  \label{eq:BE}
\beta(y)&:=\#\left\{
\v\in \cV\cap\ZZ^{n_1}:  
\begin{array}{l}
\v\equiv \v^{(M)} \bmod{M},~ \eqref{gcdv} \text{ holds}\\
(\nf_{K_1/L}(\v)D_L)^{\sigma}=y
\end{array}
\right\},
\end{align}
for $x,y \in \Lring$.   Lastly, we define the function
\begin{equation}
  \label{eq:LA}
  \la(l):=
\#
\left\{
\w\in \cW\cap\ZZ^{n_2}:  
\w\equiv \w^{(M)} \bmod{M},~ c\nf_{K_2/\QQ}(\w)=l  
\right\},
\end{equation}
for  $l\in \ZZ$.  Notice that $\u^{(\RR)},\v^{(\RR)},\w^{(\RR)}$ 
are all  non-zero, whence $\u,\v,\w$  will be 
non-zero throughout $\cU,\cV$ and $\cW$, if $G$ is large enough. 
It follows in particular that $\alpha, \beta$ 
are supported on non-zero $x,y\in\Lring$ and $\lambda$ is supported on non-zero integers.

We proceed to 
define the bilinear form
\begin{equation}\label{eq:msri}
\cN(G,U,V,W):=
\sum_{x,y \in \Lring }
\mu_S^2\left(N_{L/\QQ}(xy)\str(x,y)\right)
\alpha(x)\beta(y)\la\big(\str(x,y)\big).
\end{equation}
Our argument thus far  shows that  Theorem \ref{thm:1} holds if we can show that 
$\cN(G,U,V,W)>0$ for all sufficiently  large values of $G, U,V,W>0$ such that $(UV)^{n_1}=W^{2n_2}$.
In order to handle the square-freeness condition, we put 
\begin{equation}\label{eq:Sn}
\Sn=\begin{cases}
p_0\prod_{p\in S_f}p &\text{if $v_0\neq \infty$,}\\
\prod_{p\in S_f}p &\text{if $v_0= \infty$,}
\end{cases}
\end{equation}
where $p_0$ is the prime corresponding to $v_0$ when it is finite.
We may now rewrite  \eqref{eq:heron} as
\begin{equation}\label{eq:identity}
\mu_S^2(k)=\sum_{\substack{d^2\mid k\\ (d,\Sn)=1}} \mu(d),
\end{equation}
which allows us  to trade the square-freeness condition for congruence conditions. 
For small values of $d$ it is possible to adapt the argument in \cite{BHB} satisfactorily. 
The necessary tools for  handling the contribution from large values of $d$ are laid out in the next section.

\section{Handling  square-freeness}\label{s:4}

\subsection{Technical tools}
We begin with an estimate for the mean square of the functions defined in \eqref{eq:AL}--\eqref{eq:LA}.   The proof of \cite[Lemma 4]{BHB} easily  gives the following estimates. 

\begin{lemma}\label{lem:upper-abl}
Let $x\in \Lring$ and $l\in \ZZ$ be given. Then 
for any  $\eta>0$ we have
$\alpha(x)\ll_\eta U^{\eta}$, 
$\beta(x)\ll_\eta V^{\eta}$ and $\lambda(l)\ll_\eta W^{\eta}$.
\end{lemma}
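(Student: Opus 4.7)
My plan is to handle all three bounds by a single template: translate each counting function into a count of ideals of prescribed norm in the relevant ring of integers, then invoke the standard divisor bound $\tau(n)\ll_\eta n^\eta$.

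For $\alpha(x)$, I would identify each $\u\in\ZZ^{n_1}$ with the algebraic integer $\xi:=u_1\omega_1^{(1)}+\cdots+u_{n_1}\omega_{n_1}^{(1)}\in\fo_{K_1}$. The equation $\delta\nf_{K_1/L}(\u)=x$ then reads $N_{K_1/L}(\xi)=x/\delta$, so $\alpha(x)=0$ unless $x\in\delta\fo_L$. Assuming this, the principal ideal $(\xi)\subseteq\fo_{K_1}$ has relative norm $(x/\delta)\fo_L$, and in particular divides the extended ideal $(x/\delta)\fo_{K_1}$. The number of $\fo_{K_1}$-divisors of this ideal is $\ll_\eta |N_{K_1/\QQ}(\xi)|^{\eta}$; since $\u\in\cU$ implies $|\u|\ll U$ and hence $|N_{K_1/\QQ}(\xi)|\ll U^{n_1}$, there are $\ll_\eta U^{\eta}$ such ideals (after absorbing the factor $n_1$ into the new $\eta$). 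Finally, as $\cU\subset\fF^{(1)}$ lies in a fundamental domain for the action of $U_{K_1}^+$ on $\fo_{K_1}$, each principal ideal has at most $[U_{K_1}:U_{K_1}^+]=O(1)$ generators within $\fF^{(1)}$, giving $\alpha(x)\ll_\eta U^{\eta}$.

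The argument for $\beta(y)$ is identical, with $\cV\subset\fF^{(1)}$ replacing $\cU\subset\fF^{(1)}$ and the coprimality condition \eqref{gcdv} only tightening the count. For $\lambda(l)$ I would set $\zeta:=w_1\omega_1^{(2)}+\cdots+w_{n_2}\omega_{n_2}^{(2)}\in\fo_{K_2}$, so that $c\nf_{K_2/\QQ}(\w)=l$ reads $N_{K_2/\QQ}(\zeta)=l/c$; the divisor bound now directly controls the number of ideals of $\fo_{K_2}$ of the prescribed absolute norm by $\ll_\eta|l|^{\eta}\ll_\eta W^{\eta}$, and the same bounded-generator count in $\fF^{(2)}$ yields $\lambda(l)\ll_\eta W^{\eta}$.

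I do not anticipate any serious obstacle: this is essentially a repackaging of \cite[Lemma~4]{BHB}, with the only bookkeeping being that $\fF^{(1)}$ is a fundamental domain for $U_{K_1}^+$ rather than $U_{K_1}$, which costs only the finite index $[U_{K_1}:U_{K_1}^+]$ in the implied constant. Neither the congruence $\u\equiv\u^{(M)}\!\!\pmod{M}$ nor the condition \eqref{gcdv} plays a role in the upper bound, as both can only shrink the counted set.
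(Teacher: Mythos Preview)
Your proposal is correct and follows exactly the approach the paper has in mind: the paper simply writes ``The proof of \cite[Lemma~4]{BHB} easily gives the following estimates,'' and your argument (ideal divisor bound plus finitely many generators per ideal in the fundamental domain) is precisely that proof spelled out. The only cosmetic point is that for $\lambda(l)$ you could bypass the relative-norm step entirely and go straight to the absolute norm over~$\QQ$, but this makes no difference.
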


It will be convenient to record the 
 mean square estimates 
$$%\begin{equation}\label{eq:mean1}
\sum_{x \in \Lring} |\alpha(x)|^2\ll_\eta U^{{n_1}+\eta},\quad
\sum_{y \in \Lring} |\beta(y)|^2\ll_\eta V^{n_1+\eta}, \quad
\sum_{l\in \ZZ}|\lambda(l)|^2 \ll_\eta W^{n_2+\eta},
$$
which are   easy consequences of Lemma \ref{lem:upper-abl}.

Let $K/\QQ$ be a number field of degree $n$. 
Let $d\in \NN$ be square-free and let $\mathfrak{F}_X$
be as in \eqref{eq:domain}, where
$\fF$ is  a fundamental domain of $\fo_{K}/U_{K}^+$.  We put $\cR(X):=\ZZ^n\cap \fF_X$ for ease of notation, and proceed to define the counting functions
\begin{equation}\label{eq:def-MN}
\begin{split}
M_d(X)&:=\#\left\{\x\in \cR(X): d^2\mid \nf_{K/\QQ}(\x)\right\},\\
N_d(\XX)&:=\#\left\{(\x,\y)\in \cR(X_1)\times \cR(X_2): d\mid \left(\nf_{K/\QQ}(\x),  \nf_{K/\QQ}(\y)\right)\right\},
\end{split}
\end{equation}
for any $d\in \NN$ and $X\in \RR_{>0}$ and $\XX=(X_1,X_2)\in \RR_{>0}^2$.
By adapting an 
argument of Matthiesen \cite[Lemma 3.1]{M} we will show that $M_d(X)$ and $N_d(\XX)$ are both small for large square-free values of $d$. This is the purpose of the following result, which is absolutely pivotal in our work.

\begin{lemma}\label{lem:M}
Let $\ve>0$ and let  $d\in \NN$ be square-free. Then 
$$
M_d(X)\ll_\ve \frac{X}{d^{2-\ve}}
\quad \text{ and } \quad
N_d(\XX)\ll_\ve \frac{X_1X_2}{d^{2-\ve}},
$$
where the implied constants depend at most on  $K$ and the choice of $\ve$.
\end{lemma}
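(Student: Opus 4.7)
The plan is to translate the divisibility conditions into statements about ideal factorisations in $\fo_K$ and then apply a uniform lattice-point count. Identifying $\x = (x_1, \dots, x_n) \in \ZZ^n$ with $\alpha = \sum_{i=1}^n x_i \omega_i \in \fo_K$, the hypothesis $d^2 \mid N_{K/\QQ}(\x)$ is equivalent to $d^2 \mid N((\alpha))$. Since $d$ is square-free, for each prime $p \mid d$ I would argue that there exists an integral ideal $\fa_p$ of $\fo_K$, supported only at primes above $p$, such that $\fa_p \mid (\alpha)$ and $p^2 \leq N(\fa_p) \leq p^{[K:\QQ]}$; concretely $\fa_p$ takes one of the forms $\fp^2$, $\fp \fp'$ (with distinct $\fp, \fp' \mid p$ of residue degree one), or $\fp$ (for $\fp \mid p$ of residue degree at least two), whichever the local factorisation of $(\alpha)$ at $p$ permits. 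There are at most $C_K$ such minimal local ideals, with $C_K$ depending only on $K$. Taking $\fa = \prod_{p \mid d} \fa_p$ produces an integral ideal with $\fa \mid (\alpha)$ and $N(\fa) \geq d^2$, and the total number of such $\fa$ is at most $C_K^{\omega(d)} \ll_\ve d^\ve$ by the divisor bound.

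For each such $\fa$, the set $\fa \cap \fF_X$ parametrises elements of the lattice $\fa \subset \fo_K \otimes \RR$ lying in the region $\fF_X$ of volume $\asymp_K X$. By the classical counting of integral ideals of bounded norm in a fixed ideal class of $K$ (equivalently, by a direct geometry-of-numbers argument), one obtains the uniform bound $\#(\fa \cap \fF_X) \ll_K X/N(\fa) + 1$. Combining with $N(\fa) \geq d^2$ gives
\[
M_d(X) \leq \sum_{\fa} \#(\fa \cap \fF_X) \ll_\ve d^\ve \left(\frac{X}{d^2} + 1\right) \ll_\ve \frac{X}{d^{2-\ve}},
\]
after absorbing the $+1$ (either by excluding the trivial contribution $\alpha = 0$, or by observing that when $X < d^2$ the only element $\alpha \in \fa$ with $|N(\alpha)| \leq X$ is zero). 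The estimate for $N_d(\XX)$ follows by the same argument, as $d \mid N(\x)$ forces $\x$ to lie in some ideal $\fa$ of norm at least $d$ (with $O_\ve(d^\ve)$ choices) and independently for $\y$: the contribution is $\ll (X_1/N(\fa))(X_2/N(\fb)) \leq X_1 X_2 / d^2$ per pair, and summing gives $X_1 X_2 / d^{2-\ve}$ in total.

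The main technical obstacle is securing the lattice-point bound $\#(\fa \cap \fF_X) \ll_K X/N(\fa) + 1$ uniformly as $N(\fa)$ grows up to and beyond $X$; in this transitional regime the naive volume-plus-boundary estimate breaks down, since $\fF_X$ can be very elongated along the unit directions. This is precisely the content of Matthiesen's \cite[Lemma 3.1]{M}, whose proof extracts the required uniformity from a careful count of ideals of small norm in a fixed ideal class of $K$, and whose adaptation to the region $\fF_X$ is routine.
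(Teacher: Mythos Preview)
Your argument is correct but takes a different route from the paper's. The paper works with the representation function $R(m)=\#\{\x\in\fF:\nf_{K/\QQ}(\x)=m\}$, bounds it by $R(m)\ll r_K(|m|)$ (the Dedekind zeta coefficient), and writes $M_d(X)\ll\sum_{m\leq X/d^2}r_K(d^2m)$. Splitting $m=hm'$ with $h\mid d^\infty$ and $(m',d)=1$, multiplicativity gives $r_K(d^2m)=r_K(d^2h)r_K(m')$; the average $\sum_{m'\leq Y}r_K(m')\ll Y$ and the divisor bound $r_K(d^2h)\ll(dh)^{\ve/2}$ then finish the estimate. Your ideal-theoretic decomposition is equivalent in spirit---your divisor ideal $\fa$ plays the role of the paper's norm $d^2h$, and your lattice count $\#(\fa\cap\fF_X)\ll X/N(\fa)$ corresponds to the partial sum of $r_K$---but is packaged geometrically rather than via multiplicative functions. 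Both approaches stem from Matthiesen's observation; the paper's avoids explicitly classifying the minimal local ideals $\fa_p$, while yours makes the underlying ideal structure more transparent.

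One comment: your closing paragraph overstates the difficulty of the lattice count. Because $\fF$ is already a fundamental domain for the action of norm-one units, nonzero elements of $\fa\cap\fF_X$ are in bijection (up to the bounded factor $|U_K:U_K^+|$) with principal ideals divisible by $\fa$ of norm at most $X$, hence with integral ideals in the class of $\fa^{-1}$ of norm at most $X/N(\fa)$. The bound $\ll_K X/N(\fa)$ is then immediate from the classical count of ideals of bounded norm in a fixed class, and when $X<N(\fa)$ there are simply no nonzero contributions. The elongation of $\fF_X$ along unit directions that you worry about is precisely what the quotient by $U_K^+$ collapses; no delicate geometry-of-numbers argument is needed.
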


\begin{proof}
Put $R(m):= \#\{\x\in \ZZ^n\cap \fF: \nf_{K/\QQ}(\x)=m\}$, for any integer $m$. It follows from \cite[Lemma 8.1]{BM} that 
$
R(m)\ll  r_K(|m|),
$
where $r_K$  is  the multiplicative arithmetic function that appears as  coefficients of the Dedekind zeta function $\zeta_K(s)$.  

It follows that 
\begin{align*}
M_d(X)=\sum_{\substack{|m|\leq X\\ d^2\mid m}}R(m) 
\ll \sum_{\substack{m\leq X\\ d^2\mid m}} r_K(m)
= \sum_{\substack{m\leq X/d^2}} r_K(d^2m).
\end{align*}
Let us factorise $m=hm'$, where $h\mid d^\infty$ and $m'$ is coprime $d$. Then we have
$r_K(d^2m)=r_K(d^2h)r_K(m')$
by multiplicativity.
Hence
\begin{align*}
M_d(X)\ll 
\sum_{h\mid d^\infty } 
r_K(d^2h)
\sum_{\substack{m'\leq X/(d^2h)}} r_K(m').
\end{align*}
The inner sum is $O(X/(d^2h))$ by \cite[Eq.~(2.8)]{BM}. Furthermore,  for any $\ve>0$, we have 
$r_K(d^2h)\leq \tau(d^2h)\ll_\ve (dh)^{\ve/2}$ 
by \cite[Eq.~(2.10)]{BM}. Hence it follows that 
$$
M_d(X)\ll_\ve X \sum_{h\mid d^\infty} \frac{(dh)^{\ve/2}}{d^2h} \ll_\ve  \frac{X}{d^{2-\ve}},
$$
as required, 
since 
$$
 \sum_{h\mid d^\infty} \frac{h^{\ve/2}}{h} \leq
 \sum_{h\mid d^\infty} \frac{1}{\sqrt{h}} =
 \prod_{p\mid d} \left(1-\frac{1}{\sqrt{p}}\right)^{-1}\ll_\ve d^{\ve/2}.
$$

In a similar fashion we find that 
\begin{align*}
N_d(\XX)&=\sum_{\substack{|m_1|\leq X_1\\ d\mid m_1}}R(m_1) 
\sum_{\substack{|m_2|\leq X_2\\ d\mid m_2}}R(m_2) \\
&\ll 
\sum_{h_1,h_2\mid d^\infty } 
r_K(dh_1)r_K(dh_2)\prod_{i=1,2}
\sum_{\substack{m_i'\leq X_i/(dh_i)}} r_K(m_i')\\
&\ll_\ve  \frac{X_1X_2}{d^{2-\ve}}
\end{align*}
This completes the proof of the lemma. 
\end{proof}

\subsection{Reduction to small moduli}

Armed with Lemma \ref{lem:M},  we  now return to our analysis of 
$\cN(G,U,V,W)$, as defined in \eqref{eq:msri},
with the aim of handling the constraint 
$\mu_S^2(N_{L/\QQ}(xy)\str(x,y))=1$.
We will henceforth take 
\begin{equation}
\label{eq:UVW}
U=H^{n_2}V_0^{n_2}, \quad V=V_0^{n_2}, \quad W=H^{n_1/2}V_0^{n_1}.
\end{equation}
for  $V\ge H\ge 1$. 
 One sees immediately that $(UV)^{n_1}=W^{2n_2}$ with these choices. 
 Note that we will ultimately take 
$G=\log V$. 

We recall the convention regarding
the parameters $H$ and $V$ from \cite{BHB}.    $H$ 
will be taken to be a small fixed power of $V$.
There will be certain points in our argument where 
additional  factors of  
$V^{\eta}$ will appear with arbitrary small 
$\eta>0$. This will not matter since we will ultimately make a key saving 
which is a power of $H$, so that the error term makes 
a satisfactory overall contribution.  Let us henceforth write 
$\cN(G,U,V,W)=\cN(G,H,V)$
to better reflect our choice of $U$ and $W$. Thus
$$
\cN(G,H,V)
=
\sum_{\substack{x,y \in \Lring}}
\mu_S^2\left(N_{L/\QQ}(xy) \str(x,y)\right)
\alpha(x)\beta(y)\la\big(\str(x,y)\big).
$$
In view of \eqref{eq:AL}, we see that the function $\alpha$ is supported on 
$x_1+\tau x_2\in \Lring$ such that $x_1,x_2\ll U^{n_1/2}$. Similarly, \eqref{eq:BE} and \eqref{eq:LA} show that $\beta$ (resp.~$\lambda$) is supported on  
$y_1+\tau y_2\in \Lring$ such that $y_1,y_2\ll V^{n_1/2}$
(resp.~ $l\in \ZZ$ such that $l\ll W^{n_2}$).

Lemma \ref{lem:upper-abl} yields
$\cN(G,H,V)\ll_\eta (UV)^{n_1+\eta} W^{\eta}\ll _\eta H^{n_1n_2}V_0^{2n_1n_2+O(\eta)},
$
for any $\eta>0$.
The remainder of this  paper is dedicated to proving a commensurate lower bound, as follows.

\begin{theorem}\label{thm:2}
Let $G=\log V$ and let $H=V^{\frac{1}{\na\nb^2(\na+16)}}$. Then 
we have 
$$
\cN(G,H,V)
\gg (\log V)^{1-2\na-\nb} H^{n_1n_2}V_0^{2n_1n_2}.
$$
\end{theorem}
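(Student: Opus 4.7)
The overall strategy is to convert the square-freeness weight $\mu_S^2$ appearing in $\cN(G,H,V)$ into a sum over divisibility conditions via the identity \eqref{eq:identity}, and then to combine a careful adaptation of the analysis of Browning--Heath-Brown \cite{BHB} with the mean-value estimates of Lemma \ref{lem:M}. Explicitly,
\[
\cN(G,H,V) = \sum_{\substack{d \geq 1 \\ (d,\Sn)=1}} \mu(d)\, \cN_d(G,H,V),
\]
where $\cN_d(G,H,V)$ denotes the analogous bilinear form with the weight $\mu_S^2(N_{L/\QQ}(xy)\str(x,y))$ replaced by the indicator of $d^2 \mid N_{L/\QQ}(xy)\str(x,y)$. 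I would then fix a threshold $D = V^{\rho}$ for a small $\rho > 0$ to be chosen later, and treat the ranges $d \leq D$ and $d > D$ separately.

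For the range $d \leq D$, the plan is to run the entire asymptotic analysis of \cite{BHB} with two modifications. First, after undoing the substitutions $x = \delta\, \nf_{K_1/L}(\u)$, $y = (\nf_{K_1/L}(\v) D_L)^\sigma$ and $\str(x,y) = c\, \nf_{K_2/\QQ}(\w)$, the divisibility $d^2 \mid N_{L/\QQ}(xy)\str(x,y)$ becomes $d^2 \mid \nf_{K_1/\QQ}(\u) \nf_{K_1/\QQ}(\v) \nf_{K_2/\QQ}(\w)$ up to units and primes in $S \cup \{v_0\}$, which combines with the congruence conditions modulo $M$ already present. Second, the local factors at primes $p \mid d$ appearing in the singular series must be recomputed. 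This is precisely the programme to be carried out in \S \ref{s:5}--\S \ref{s:7}. One expects an asymptotic of the shape
\[
\cN_d(G,H,V) = \kappa_d \cdot (\log V)^{1-2\na - \nb} H^{n_1 n_2} V_0^{2 n_1 n_2} (1 + o(1))
\]
for each fixed $d$, with $\kappa_d$ multiplicative and satisfying $|\kappa_d| \ll_\eta d^{-2+\eta}$ uniformly. Summing $\mu(d) \kappa_d$ over $d \leq D$ produces the expected Euler product constant, which is absolutely convergent and should be strictly positive, thanks to the positivity of the leading constant for $d=1$ (as in \cite{BHB}) and the smallness of $|\kappa_p|/|\kappa_1|$ on average over primes.

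For the tail $d > D$, I would bound $\lambda$ pointwise by $V^{O(\eta)}$ using Lemma \ref{lem:upper-abl} and reduce the estimation of $\cN_d(G,H,V)$ to a sum over $\u$ and $\v$. Writing a square-free $d$ as a product $d = d_1 d_2 d_3 e_{12} e_{13} e_{23}$ of pairwise coprime factors, in which $d_i^2$ divides the $i$-th of the three norms and $e_{ij}$ is the contribution of primes dividing both the $i$-th and $j$-th norms, Lemma \ref{lem:M} (using $M_d$ for the $d_i$ and $N_d$ for the $e_{ij}$, applied to the appropriate norm form $\nf_{K_1/\QQ}$ or $\nf_{K_2/\QQ}$) yields
\[
|\cN_d(G,H,V)| \ll_\eta V^{O(\eta)} H^{n_1 n_2} V_0^{2 n_1 n_2} \cdot d^{-2+O(\eta)}.
\]
Summation over $d > D$ gives $V^{O(\eta)} H^{n_1 n_2} V_0^{2 n_1 n_2}/D^{1-O(\eta)}$, which is negligible compared to the main term provided $\rho$ is chosen larger than a suitable multiple of $\eta$ while remaining much smaller than the exponent of $H$.

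The principal obstacle is the small-$d$ case. Tracking the extra moduli $d^2$ through the delicate circle-method-style argument of \cite{BHB}, verifying that the local factors $\kappa_d$ obey the required size bound, and establishing positivity of the resulting main constant all require substantial technical work and must remain compatible with the various error terms already present in \cite{BHB}. It is this compatibility, together with the power savings required from the tail $d > D$, that fixes the precise value of the exponent $1/(\na \nb^2(\na + 16))$ for $H$ appearing in the statement of the theorem.
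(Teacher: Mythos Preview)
Your overall architecture---Möbius-open the square-free weight, truncate the moduli, run \cite{BHB} on the small moduli, and kill the tail with Lemma~\ref{lem:M}---is exactly what the paper does. The main structural difference is that the paper does \emph{not} work with a single Möbius variable $d$ attached to the full product $N_{L/\QQ}(xy)\str(x,y)$. It first proves an algebraic coprimality identity (Lemma~\ref{lem:coprime}) showing that
\[
\mu_S^2\bigl(N_{L/\QQ}(xy)\str(x,y)\bigr)=\mu_S^2\bigl(N_{L/\QQ}(x)\bigr)\,\mu_S^2\bigl(N_{L/\QQ}(y)\bigr)\,\mu_S^2\bigl(\str(x,y)\bigr)
\]
subject to $(N_{L/\QQ}(x),N_{L/\QQ}(y))_S=1$, and then opens each factor separately (and the coprimality by Möbius), yielding four independent variables $d,e,f,k$ each constraining exactly one of $x$, $y$, $\str(x,y)$. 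The payoff is that after truncation and sorting into residue classes, the small-moduli contribution becomes a sum of terms $\cN_{m,\u^{(m)},\v^{(m)},\w^{(m)}}$ in which the only new feature relative to \cite{BHB} is that the modulus $M$ is replaced by $Mm$ with $m=[d^2,e^2,f^2,k]$; one then re-runs \cite[\S\S4--9]{BHB} tracking polynomial dependence on $m$. Your single variable $d$ would force you into essentially the same decomposition anyway (your $d_1d_2d_3e_{12}e_{13}e_{23}$ is doing this a posteriori), so the paper's route is cleaner and avoids having to prove an asymptotic for $\cN_d$ with a mixed divisibility condition on a product of three quantities.

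There is also a small gap in your tail estimate as stated. After bounding $\lambda$ pointwise you are left summing over $x,y$; the pieces of $d$ that constrain $\str(x,y)$ (your $d_3,e_{13},e_{23}$) are then bilinear conditions on $(x,y)$ and are not directly handled by Lemma~\ref{lem:M}. The paper deals with the analogue (its variable $f$) by reversing roles: it bounds $\alpha,\beta$ trivially, opens $\lambda$ as a sum over $\w$, applies the $M_d$-bound of Lemma~\ref{lem:M} to count $\w$ with $f^2\mid c\nf_{K_2/\QQ}(\w)$, and then uses a divisor bound for the number of $(x,y)$ with prescribed $\str(x,y)$. You would need to do the same, choosing which weight to estimate pointwise according to which factor of the product is being constrained.
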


We begin by simplifying the square-freeness constraint.

\begin{lemma}\label{lem:coprime}
For any   $x,y\in \fo_L$ we have 
$$
\mu_S^2\left(N_{L/\QQ}(xy)\str(x,y)\right)=
\mu_S^2\left(N_{L/\QQ}(xy)\right)\mu_S^2\left(\str(x,y)\right).
$$
\end{lemma}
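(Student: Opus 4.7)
The plan is to reduce the identity to a local statement at each prime and then exploit how $p$ splits in $L$. Since both sides of the claimed identity take values in $\{0,1\}$, it suffices to prove that for every prime $p\notin S\cup\{v_0\}$, if $p^2$ divides $N_{L/\QQ}(xy)\str(x,y)$ then in fact $p^2$ already divides one of the factors. The $(\Leftarrow)$ direction is trivial, and the only nontrivial case of $(\Rightarrow)$ is when $p\mid N_{L/\QQ}(xy)$ and $p\mid \str(x,y)$, with neither divisibility upgraded to $p^2$. We must rule this out.

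Recall from the standing assumption in \S\ref{sec2} that $S\cup\{v_0\}$ contains all primes ramifying in $L$ (being divisors of $2a$). Hence any $p\notin S\cup\{v_0\}$ is unramified in $L$, so either inert or split. In the inert case $p\fo_L=\fp$ with residue degree two, which forces $v_p(N_{L/\QQ}(z))=2v_\fp(z)$ for every $z\in\fo_L$, and thus $p\mid N_{L/\QQ}(xy)$ automatically implies $p^2\mid N_{L/\QQ}(xy)$; the lemma holds trivially at such primes.

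The main step is the split case $p\fo_L=\fp_1\fp_2$ with $\fp_2=\fp_1^\sigma$. Using the identification $\fo_L\otimes\ZZ_p\cong\ZZ_p\times\ZZ_p$ via the two embeddings $\iota_1,\iota_2$ (which $\sigma$ interchanges), write $a_i=\iota_i(x)$ and $b_i=\iota_i(y)$. Then
$$
N_{L/\QQ}(xy)=a_1a_2b_1b_2,\qquad \str(x,y)=\frac{\iota_1(xy^\sigma-x^\sigma y)}{\iota_1(D_L)}=\frac{a_1b_2-a_2b_1}{\Delta},
$$
with $\Delta=\iota_1(D_L)$ a $p$-adic unit since $p$ does not divide $\disc(L)=-D_L^2$ (up to units). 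Assume $p\mid N_{L/\QQ}(xy)$; by symmetry we may take $p\mid a_1$. Then $p\mid \str(x,y)$ forces $p\mid a_2b_1$, whence either $p\mid a_2$, giving $p^2\mid a_1a_2\mid N_{L/\QQ}(xy)$, or $p\mid b_1$, giving $p^2\mid a_1b_1\mid N_{L/\QQ}(xy)$. In either subcase $p^2\mid N_{L/\QQ}(xy)$, completing the proof.

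The main obstacle is simply recognising the right local framework: once one works with the two $p$-adic embeddings of $\fo_L$ and writes $\str(x,y)$ as the $2\times 2$ determinant $a_1b_2-a_2b_1$ divided by a local unit, the case analysis is immediate and short.
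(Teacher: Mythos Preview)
Your proof is correct and takes a genuinely different route from the paper's. The paper works directly in the integral basis $\{1,\tau\}$ of $\fo_L$: writing $x=x_1+\tau x_2$ and $y=y_1+\tau y_2$, one computes $\str(x,y)=x_2y_1-x_1y_2$ and $N_{L/\QQ}(x)=x_1^2+x_2^2N_{L/\QQ}(\tau)+x_1x_2\tr_{L/\QQ}(\tau)$, and then argues uniformly for every prime $p\notin S\cup\{v_0\}$ by an elementary substitution (no case split on splitting behaviour). Your approach instead exploits the structure of $\fo_L\otimes\ZZ_p$: the inert case is dispatched by the parity observation $v_p(N_{L/\QQ}(z))=2v_\fp(z)$, and in the split case the two $p$-adic embeddings turn $\str(x,y)$ into the $2\times 2$ determinant $a_1b_2-a_2b_1$ divided by a unit, after which the conclusion is a one-line case analysis. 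Your method is more conceptual and arguably cleaner, at the cost of a case distinction and a mild appeal to the standing hypothesis that $S\cup\{v_0\}$ contains the ramified primes; the paper's method is more elementary and uniform in $p$, but requires carrying out the explicit basis computation.
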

\begin{proof}
We write everything out in terms of the basis $\{1,\tau\}$ for $\fo_L$, 
so that $x=x_1+\tau x_2$ and $y=y_1+\tau y_2$.
Thus gives
\begin{align*}
N_{L/\QQ}(x)
&=x_1^2+x_2^2 N_{L/\QQ}\tau+ x_1x_2 \tr_{L/\QQ}\tau,\\
N_{L/\QQ}(y)
&=y_1^2+y_2^2 N_{L/\QQ}\tau+ y_1y_2 \tr_{L/\QQ}\tau,\\
\str(x,y)&= x_2y_1-x_1y_2,
\end{align*}
since $\tr_{L/\QQ}(1/(\tau-\tau^\sigma))=0$
and  $\tr_{L/\QQ}(\tau/(\tau-\tau^\sigma))=1$.
Assuming that $\mu_S^2(N_{L/\QQ}(xy))=1$, 
it suffices to show that  there is no common prime divisor $p$ of $\str(x,y)$ and $N_{L/\QQ}(x)$ such that $p\nmid \Sn$.  To see this, we suppose otherwise for a contradiction. We may assume that one of $y_1$ or $y_2$  is coprime to $p$, else we would have $p^2\mid N_{L/\QQ}(y)$. 
 Suppose that $p\nmid y_1$ and let $\bar{y_1}$ denote the multiplicative inverse of $y_1$ modulo $p$. Then 
$p\mid \str(x,y)$ implies that  $x_2\equiv x_1\bar y_1y_2 \bmod{p}$. Next, we deduce from $p\mid N_{L/\QQ}(x)$ that 
$$
x_1^2\left(1+(\bar y_1 y_2)^2 N_{L/\QQ}\tau +\bar y_1 y_2 \tr_{L/\QQ}\tau\right) \equiv 0 \bmod{p}.
$$ 
But this is equivalent to $p\mid N_{L/\QQ}(y)$, since we cannot have $p\mid x_1$, which in turn implies that $p^2\mid N_{L/\QQ}(xy)$. This is a contradiction and so  completes the proof of the lemma.
\end{proof}

Given non-zero integers $a,b$, let us write 
$(a,b)_S$
for the greatest common divisor of $a$ and $b$ which is coprime to  $\Sn$, in the notation of \eqref{eq:Sn}.
 Lemma~\ref{lem:coprime} implies
that 
\begin{align*}
\cN(G,H,V)=
\hspace{-0.3cm}
\sum_{\substack{x,y \in \Lring\\ (N_{L/\QQ}(x),N_{L/\QQ}(y))_S=1}}
\hspace{-0.3cm}
&\mu_S^2\left(N_{L/\QQ}(x)\right)
\mu_S^2\left(N_{L/\QQ}(y)\right)\mu_S^2\left(\str(x,y)\right)\\
&\quad \times
\alpha(x)\beta(y)\la\big(\str(x,y)\big).
\end{align*}
We use M\"obius inversion to take care of the coprimality condition and \eqref{eq:identity} to open up the factors involving $\mu_S^2$. This leads to the conclusion that 
\begin{align*}
\cN(G,H,V)=
\sum_{\substack{d,e,f,k=1\\ (defk,\Sn)=1}}^\infty\mu(d)\mu(e)\mu(f)\mu(k)
\sum_{\substack{x,y \in \Lring\\ [d^2,k]\mid N_{L/\QQ}(x)\\ [e^2,k]\mid N_{L/\QQ}(y)\\
f^2\mid \str(x,y)
}}
\alpha(x)\beta(y)\la\big(\str(x,y)\big).
\end{align*}
We separate out the contribution 
\begin{equation}\begin{split}\label{eqn4.10}
\cN_\xi(G,H,V)=~&
\sum_{\substack{d,e,f,k\leq V^\xi \\ (defk,\Sn)=1}}\mu(d)\mu(e)\mu(f)\mu(k)\\
&\quad \times\sum_{\substack{x,y \in \Lring\\ [d^2,k]\mid N_{L/\QQ}(x)\\ [e^2,k]\mid N_{L/\QQ}(y)\\
f^2\mid \str(x,y)
}}
\alpha(x)\beta(y)\la\big(\str(x,y)\big),
\end{split}
\end{equation}
from small values of $d,e,f,k$.
The reader should think of $\xi$ as being a fixed but  small positive real number. 
Our next task is to show that 
\begin{equation}\label{eqn4.10b}
\cN(G,H,V)=\cN_\xi(G,H,V)+O_\eta\left(H^{n_1n_2}V_0^{2n_1n_2-\xi+O(\eta)}\right),
\end{equation}
for any $\eta>0$.  This means, on taking $\eta$ sufficiently small in terms of $\xi$,  that 
in order to prove Theorem \ref{thm:2} 
we may henceforth focus on 
a lower bound for 
$\cN_\xi(G,H,V)$ for any fixed $\xi>0$.

To establish the claim let us first  consider  the overall  contribution from $d>V^{\xi}$. Since $\alpha,\beta, \lambda$ are supported away from zero, this means that there are $O_\eta(V^\eta)$ choices for $e,f,k$ for a fixed choice of $x,y$ appearing in the sum, by the standard estimate for the divisor function. Applying  Lemma~\ref{lem:upper-abl} to estimate $\beta$ and $\lambda$, we open up $\alpha$ to find that this contribution is 
 \begin{align*}
&\ll_{\eta} V^{n_1+O(\eta)} \sum_{d>V^{\xi}} \mu^2(d) \sum_{\substack{x\in \Lring\\ d^2\mid N_{L/\QQ}(x) }}\alpha(x)
\ll_{\eta} V^{n_1+O(\eta)} \sum_{d>V^{\xi}} \mu^2(d) M_{d}(cU^{n_1}),
\end{align*} 
in the notation of  \eqref{eq:def-MN}, for an appropriate constant $c>0$ depending on $K_1$.
But now we invoke Lemma \ref{lem:M} to bound this by 
$$
\ll_{\eta} V^{n_1+O(\eta)} \sum_{d>V^{\xi}} \frac{U^{n_1}}{d^{2-\eta}} \ll_\eta (UV)^{n_1}V^{-\xi+O(\eta)},
$$
as claimed. The same argument deals with the contribution from 
$e$  exceeding $V^{\xi}$.  
Similarly, on applying  Lemma~\ref{lem:upper-abl} to estimate $\lambda$, we open up $\alpha$ and $\beta$ to find that the contribution from $k>V^{\xi}$ is 
 \begin{align*}
&\ll_{\eta} V^{O(\eta)} \sum_{k>V^{\xi}} \mu^2(k) \sum_{\substack{x,y\in \Lring\\ k\mid (N_{L/\QQ}(x),N_{L/\QQ}(y)) }}\alpha(x)\beta(y)\\
&\ll_{\eta} V^{O(\eta)} \sum_{k>V^{\xi}} \mu^2(k) N_{k}(cU^{n_1},cV^{n_1}),
\end{align*} 
for an appropriate constant $c>0$, in the notation of  \eqref{eq:def-MN}.  Lemma \ref{lem:M} therefore 
shows that this makes an overall contribution 
$O_\eta( (UV)^{n_1}V^{-\xi+O(\eta)})$, as required.
Finally, using the same strategy, the contribution from $f>V^{\xi}$ is seen to be
 \begin{align*}
%&\ll_{\eta} V^{O(\eta)} \sum_{f>V^{\xi}} \mu^2(f) \sum_{\substack{x,y\in \Lring\\ \alpha(x)\beta(y)\neq 0\\ f^2\mid \str(x,y)}} \lambda(\str(x,y))\\
&\ll_{\eta} V^{O(\eta)} \sum_{f>V^{\xi}} \mu^2(f) 
\sum_{\substack{\w \in \cW\cap \ZZ^{n_2} \\ f^2\mid c \nf_{K_2/\QQ}(\w)}} 
N(\w),
\end{align*} 
where $N(\w)$ denotes the number of $x,y\in \Lring$ for which $\alpha(x)\beta(y)\neq0$ and 
$\str(x,y)=c\nf_{K_2/\QQ}(\w)$. Writing $x=x_1+\tau x_2$ and $y=y_1+\tau y_2$,  
so  that $\str(x,y)=x_2y_1-x_1y_2$, 
the condition 
$\alpha(x)\beta(y)\neq0$ ensures that $x_1,x_2\ll U^{n_1/2}$ and $y_1,y_2\ll V^{n_1/2}$. Hence it follows from the standard  estimate for the divisor function that 
$N(\w)=O_\eta(U^{n_1/2}V^{n_1/2+\eta})$. Applying  Lemma 
\ref{lem:M} to estimate the number of $\w$ we obtain the contribution
 \begin{align*}
&\ll_{\eta} (UV)^{n_1/2}V^{O(\eta)} \sum_{f>V^{\xi}} \mu^2(f)  \frac{W^{n_2}}{f^{2-\eta}}
\ll_{\eta} (UV)^{n_1}V^{-\xi +O(\eta)},
\end{align*} 
which is also satisfactory. This completes the proof of \eqref{eqn4.10b}.

\subsection{Preliminary analysis of $\cN_\xi(G,H,V)$}

In this section we prepare the evaluation of the main term  \eqref{eqn4.10}. We split the contribution according to the residue classes of  $\u$, $\v$ and $\w$. In the following we write $$m=[d^2,e^2,f^2,k].$$ In particular, we have $(m,\Sn)=1$ and $m\leq V^{7\xi}.$
Let $\cS (d,e,f,k)$ be the set of residue classes $\u,\v \in  (\ZZ/m\ZZ)^{n_1}$ and $\w \in  (\ZZ/m\ZZ)^{n_2}$ such that 
$$ 
[d^2,k] | N_{L/\QQ} (\del \nf_{K_1/L}(\u)),\quad [e^2,k]| N_{L/\QQ}(\nf_{K_1/L}(\v)D_L),\quad f^2|c\nf_{K_2/\QQ}(\w).
$$
Let $(\u^{(m)}, \v^{(m)},\w^{(m)})\in \cS(d,e,f,k)$.
We denote by $\u^{(M,m)}\in (\ZZ/mM\ZZ)^{n_1}$ the vector which reduces to $\u^{(M)}$ modulo $M$ and to $\u^{(m)}$ modulo $m$, and similarly for $\v^{(M,m)}, \w^{(M,m)}$. 
Furthermore,  as in \S \ref{sec2}, we define the counting functions 
\begin{align*}
\alp_{m,\u^{(m)}}(x)&:=
\#
\{
\u\in \cU\cap\ZZ^{n_1}: 
\u\equiv \u^{(M,m)} \bmod{Mm},~ \delta \nf_{K_1/L}(\u)=x
\},\\
\beta_{m,\v^{(m)}}(y)&:=\#\left\{
\v\in \cV\cap\ZZ^{n_1}:  
\begin{array}{l}
\v\equiv \v^{(M,m)} \bmod{Mm},\\ (\nf_{K_1/L}(\v)D_L)^{\sigma}=y,\ (\ref{gcdv}) \mbox{ holds}
\end{array}
\right\},
\end{align*}
for $x,y \in \Lring$, and
$$
  \la_{m,\w^{(m)}}(l):=
\#
\left\{
\w\in \cW\cap\ZZ^{n_2}:  
\w\equiv \w^{(M,m)} \bmod{Mm},~ c\nf_{K_2/\QQ}(\w)=l
\right\},
$$
for  $l\in \ZZ$. Furthermore, we define
\begin{equation}\label{eq:gull}
\cN_{m,\u^{(m)},\v^{(m)},\w^{(m)}}=
\sum_{x ,y\in \Lring }\alpha_{m,\u^{(m)}}(x)\beta_{m,\v^{(m)}}(y)\la_{m,\w^{(m)}}\big(\str(x,y)\big).
\end{equation}
The dependence of this function on $G,H,V$ is to be understood implicitly. Then we have
\begin{equation}\label{eqn5.7}
\begin{split}
\cN_\xi(G,H,V)=~&
\sum_{\substack{d,e,f,k\leq V^\xi \\ (defk,\Sn)=1}}\mu(d)\mu(e)\mu(f)\mu(k)\\ 
&\quad \times \sum_{(\u^{(m)},\v^{(m)},\w^{(m)})\in \cS(d,e,f,k)}\cN_{m,\u^{(m)},\v^{(m)},\w^{(m)}}.
\end{split}
\end{equation}
Our next goal is to evaluate $\cN_{m,\u^{(m)},\v^{(m)},\w^{(m)}}$ for fixed $m$ and fixed vectors $\u^{(m)}$, $\v^{(m)}$, and $\w^{(m)}$.

\section{Approximating functions}

In this section we use results from \S 4 and \S 5 from \cite{BHB} to find an approximation $\alpmuhat$ to $\alpmu$. The main difference is that in \cite{BHB} the modulus $M$ was fixed and all implied constants were allowed to depend on it. In our situation $\alpmu$ also  depends on $m$, which varies in our argument. Hence we need to   indicate explicitly the dependence on $m$
 in all our estimates. As the proofs are almost the same, we omit most of the details. 

We start by constructing a function $\omemu(x)$ such that the conditions  \cite[Eqs.~(4.8)--(4.11)]{BHB} are satisfied. For this let 
\begin{equation}\label{eq:snow}
\omemu(x):= (Mm)^{-\na}\ome_1 (x),
\end{equation}
where $\ome_1(x)$ is the function constructed and described in  \cite[Lemma 9]{BHB}, with associated parameters $M=1$ and $n=\na$. 

%\begin{lemma}[\cite{BHB}]\label{lem9}
%There exists a continuously differentiable function $\ome_1: \R^2\rightarrow [0,\infty)$, depending on $\mathcal{U}$, with $\ome_1(x+h)-\ome_1(x)\ll U^{-\na/2}|h|$, for all $x,h\in \R^2$, and such that
%$$\int_R \ome_1(x_1,x_2)\d x_1 \d x_2 = \meas \{\u\in \mathcal{U}: \del \nf_{K_1/L}(\u)\in R\},$$
%for every square $R$ with sides parallel to the $x_1$ and $x_2$ axes and side-length $\rho\ll U^{\na/2}$. Furthermore, $\ome_1$ is supported on a disc of radius $O(U^{\na/2})$ and satisfies $\ome_1(x)\ll 1$ throughout this disc. Moreover, there is a disc of radius $\gg G^{-1}U^{\na /2}$ around the point $U^{\na/2}\del \nf_{K_1/L}(\u^{(\R)})$ on which  $\ome_1(x)\gg G^{2-\na}$.
%\end{lemma}

 Let $\alpmuhat$ be defined as  \cite[Eq.~(4.13)]{BHB} with $\ome (x)$ replaced by $\ome_m(x)$, in the notation of \eqref{eq:snow}, and with the  density function
$$
\rho^{(m)} (y,q):= \frac{(Mm)^{\na}}{[Mm,q]^{\na}}\card\left\{\s \bmod [Mm,q]:\ 
\begin{array}{l}
\s \equiv \u^{(M,m)}\bmod Mm\\
 \del \nf_{K_1/L}(\s)\equiv y \bmod q
 \end{array}{}
 \right\}.
$$
Thus we have
$$%\begin{equation}\label{eq:hat-alpha}
\alpmuhat = \omemu(x)\sum_{q\leq Q} ~\sumstar_{t\mmod q} e_q^{(L)}(-tx)\sum_{z\mmod q}\rho^{(m)}(z,q)e_q^{(L)}(tz),
$$
where the summation $\sum_{t\mmod q}^*$ is understood as a summation of $t=t_1+t_2\tau$ with $t\in \Z/q\Z$ and $(q,t_1,t_2)=1$, and we recall that 
$e_q^{(L)}(x):=e_q(b)$ 
for $x=a+b\tau\in L$ with $a,b\in \Q$ and $q\in \N$.

By rescaling, we see that condition (4.11) in \cite{BHB} is satisfied for the parameters 
$W\ll (Mm)^{-\na}U^{\na -1}$ and $Q\leq U^{1/2}$. Both of the conditions (4.9) and (4.10) in \cite{BHB} are clear from the definition of the densities $\rho^{(m)}(y,q)$.
%; i.e. we have
% $\rho^{(m)}(0,1)=1$ and 
%$$%\begin{equation}\label{rhosums}
%\sum_{\substack{y\bmod rs\\ y\equiv z \bmod r}}\rho^{(m)}(y,rs)= \rho^{(m)}(z,r).
%$$
Moreover, a short calculation reveals that condition (4.8) in \cite{BHB} holds with 
$E\ll_\eta m^{\na +1} Q^{\na +1} U^{\na -1 +\eta}.$
Let $\alpo := \alpmu-\alpmuhat$. An application of \cite[Lemma~7]{BHB} gives the following lemma.

\begin{lemma}\label{lem10}
Let $R$ be a square in the $(x_1,x_2)$-plane with sides parallel to the coordinate axes and with side length $\rho\geq 1$ satisfying $\rho\ll U^{\na/2}$. Then 
\begin{equation*}
\left| \sum_{\substack{x\in R\\ x\equiv y \bmod q}} \alpo \right| \ll_\eta m^{\na +1}Q^{\na +1}U^{\na-1+\eta},
\end{equation*}
for any $\eta >0$, $\na \geq 3$ and all $y$ modulo $q$ for $q\leq Q\leq U^{1/2}$.
\end{lemma}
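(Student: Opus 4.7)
The proof is a direct application of \cite[Lemma 7]{BHB} to the counting function $\alpmu$ and its analytic approximation $\alpmuhat$, constructed via the weight $\omemu$ from \eqref{eq:snow} and the density $\rho^{(m)}$ from \eqref{eq:rho}. In the paragraphs immediately preceding the lemma statement we have systematically verified the four axioms (4.8)--(4.11) of \cite{BHB}, tracking the additional $m$-dependence explicitly: axiom (4.8) holds with $E\ll_\eta m^{\na+1}Q^{\na+1}U^{\na-1+\eta}$, axiom (4.9) is the tautology $\rho^{(m)}(0,1)=1$, axiom (4.10) is the multiplicativity relation \eqref{rhosums}, and axiom (4.11) holds with $W\ll (Mm)^{-\na}U^{\na-1}$ throughout the range $q\leq Q\leq U^{1/2}$, as a simple rescaling of the analogous estimate in \cite[Eq.~(5.4)]{BHB}.

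Invoking \cite[Lemma 7]{BHB} then yields a bound on the sum of $\alpo$ over a square $R$ intersected with a fixed residue class modulo $q$, expressed as a polynomial in $E$, $W$, and $Q$. Under our choice of parameters -- in particular $Q\leq U^{1/2}$, $\rho\leq U^{\na/2}$, and $\na\geq 3$ -- the $E$-contribution dominates every subsidiary term, yielding precisely the stated upper bound $m^{\na+1}Q^{\na+1}U^{\na-1+\eta}$. Since \cite[Lemma 7]{BHB} is being used as a black box, no modification of its internal workings is required.

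The main obstacle, rather than a conceptual difficulty, is careful bookkeeping of the parameter $m$: in \cite{BHB} the modulus $M$ was fixed once and for all and absorbed into implicit constants, whereas in our setting $m$ varies up to $V^{7\xi}$ and must be tracked through every estimate. Concretely, one must keep track of the factors $(Mm)^{-\na}$ and $[Mm,q]^{\na+1}$ introduced through \eqref{eq:snow} and \eqref{eq:rho}, verify that the bound $[Mm,q]^{\na+1}\ll m^{\na+1}q^{\na+1}$ is dominated by the explicit $m^{\na+1}Q^{\na+1}$ appearing in $E$, and confirm that the $W$-term with its saving of $(Mm)^{-\na}$ is absorbed. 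Once these routine checks are carried out, the conclusion follows verbatim from \cite[Lemma 7]{BHB}.
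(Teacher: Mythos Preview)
Your proposal is correct and follows exactly the paper's approach: the paper likewise presents this lemma as an immediate consequence of \cite[Lemma~7]{BHB}, having verified conditions (4.8)--(4.11) with the explicit $m$-dependent bounds $E\ll_\eta m^{\na+1}Q^{\na+1}U^{\na-1+\eta}$ and $W\ll (Mm)^{-\na}U^{\na-1}$ in the preceding paragraphs. No separate proof is given in the paper beyond the sentence ``Since we have now verified conditions (4.8)--(4.11) in \cite{BHB} we can apply \cite[Lemma~7]{BHB},'' so your write-up is if anything slightly more detailed than the original.
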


Moreover, \cite[Lemma 8]{BHB} yields the following $L^2$-bound for $\alpmuhat$.

\begin{lemma}\label{lem11}
For any $\eta >0$ one has
\begin{equation*}
\sum_{x\in \Lring}|\alpmuhat|^2\ll_\eta U^{\na+\eta}G^{2\na},
\end{equation*}
for $Q^2\leq U$, $G\leq U^{1/(\na +1)}$ and $Q^{\na +5} (Mm)^{\na +1}\leq U^{1-\eta}$. 
\end{lemma}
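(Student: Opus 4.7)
The plan is to invoke Lemma~8 of \cite{BHB} as a black box, applied to the pair $\alpmu$ and $\alpmuhat$. The bulk of the work is simply to verify that the necessary hypotheses of that lemma have been arranged by the preceding discussion, with careful attention paid to the dependence on $m$ (which, unlike the fixed modulus $M$ in \cite{BHB}, varies across the final application of our estimates).

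The ingredients I would assemble, in order, are the following. First, the mean-square bound on the unhatted function: since $\alpmu(x) \leq \alpha(x)$ pointwise, the estimate \eqref{eq:mean1} immediately gives $\sum_{x \in R} |\alpmu|^2 \ll_\eta U^{\na+\eta}$, which is the input bound required for Lemma~8 of \cite{BHB}. Second, the pointwise bound on the weight: from the definition \eqref{eq:snow} and Lemma~\ref{lem9}, we have $\omemu(x) \ll (Mm)^{-\na}$ throughout the support disc of radius $O(U^{\na/2})$. Third, the lower bound for $S$: as already noted in the text, $S_0 = \#(\cU \cap \ZZ^{\na}) \gg U^{\na} G^{-\na}$ for $G$ sufficiently large (by Lemma~\ref{lem9}, applied on a disc around $U^{\na/2}\delta\nf_{K_1/L}(\u^{(\RR)})$), so that \eqref{eq:goat} yields $S \gg (Mm)^{-\na} U^{\na} G^{-\na}$ provided $G \leq U^{1/(\na+1)}$, matching the condition in the statement. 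Fourth, the approximation error: Lemma~\ref{lem10} supplies the required bound $|\sum_{\substack{x \in R \\ x \equiv y \bmod q}} \alpo| \ll_\eta m^{\na+1} Q^{\na+1} U^{\na-1+\eta}$ for every square $R$ of side-length $\ll U^{\na/2}$ and every $q \leq Q \leq U^{1/2}$.

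With these four inputs in hand, Lemma~8 of \cite{BHB} delivers an $L^2$-bound of the shape $\sum_{x \in \Lring} |\alpmuhat|^2 \ll_\eta U^{\na+\eta} G^{2\na}$, where the factor $G^{2\na}$ reflects that the lower bound for $S$ carried a loss of $G^{-\na}$ (and Lemma~8 contributes a square of this ratio in its output). The condition $Q^2 \leq U$ is inherited from the range of $q$ covered by Lemma~\ref{lem10}, and $G \leq U^{1/(\na+1)}$ is needed for $S_0$ to dominate its error in \eqref{eq:goat}. The key condition $Q^{\na+5}(Mm)^{\na+1} \leq U^{1-\eta}$ arises precisely from insisting that the squared error in the approximation, summed over residue classes to moduli up to $Q^2$, remains negligible compared with $U^{\na} G^{2\na}$; the exponent $\na+5$ comes from combining Lemma~\ref{lem10}'s pointwise error (whose contribution is of order $m^{2(\na+1)}Q^{2(\na+1)}U^{2\na-2+\eta}$ after the $L^2$-square) with the summation over $Q^2$ residue classes and dividing through by the target $U^{\na+\eta}$, which leaves the slack $Q^{\na+5}(Mm)^{\na+1} \leq U^{1-\eta}$.

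The main obstacle is a purely bookkeeping one: tracking the explicit $m$-dependence throughout the application of Lemma~8 of \cite{BHB}, since in the original reference $M$ was fixed and absorbed into implied constants. The fact that Lemma~\ref{lem10} already displays the factor $m^{\na+1}$ makes this routine, and the extra factor $(Mm)^{\na+1}$ in the final hypothesis is precisely what accounts for this dependence. No further ideas are needed beyond faithful invocation of \cite[Lemma~8]{BHB}.
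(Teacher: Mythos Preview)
Your proposal is correct and takes essentially the same approach as the paper: assemble the mean-square bound $\sum_{x}|\alpmu|^2\ll_\eta U^{\na+\eta}$ via $\alpmu\leq\alpha$, the pointwise bound $\omemu\ll(Mm)^{-\na}$, the lower bound $S\gg(Mm)^{-\na}U^{\na}G^{-\na}$ for $G\leq U^{1/(\na+1)}$, and then invoke \cite[Lemma~8]{BHB} as a black box. The paper records exactly these ingredients in the paragraph preceding Lemma~\ref{lem11} and then simply states the result, so your write-up is, if anything, slightly more explicit about how the condition $Q^{\na+5}(Mm)^{\na+1}\leq U^{1-\eta}$ emerges.
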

%detailed calculation:\sum_{x\in \Lring}|\alpmuhat|^2\ll_\eta (\frac{N^2}{U^{\na}G^{-\na}})^2 (N^2+U^{\na +\eta}$, now choose $N^2=U^{\na}$. 
Finally, we observe that
\begin{equation*}
\begin{split}
\sum_{x\in \Lring} |\alpo|^2&\ll \sum_{x\in \Lring}|\alpmu|^2 + \sum_{x\in \Lring} |\alpmuhat|^2 \ll_\eta U^{\na +\eta} G^{2\na}.
\end{split}
\end{equation*}
Next we apply \cite[Lemma 13]{BHB} to the function $\alpo$. Here one can verify that the parameters $W_0=m^{\na +1}Q^{\na +1}U^{\na -1+\eta}$ (which is a consequence of Lemma \ref{lem10}) and $A_0\ll_\eta Q^3U^\eta$ form an admissible choice. We obtain the following result.

\begin{lemma}\label{lem14}
Let $\eta>0$, $Q\leq Q_0\leq U^{1/(\na +16)}$ and $G\leq U^{1/(\na +1)}$. Then 
$$
\sum_{q\leq Q_0}q^2\sum_{y\bmod q}\max_R\left|\sum_{\substack{x\in R\\ x\equiv y \bmod q}}\alpo\right|^2
\ll_\eta Q_0Q^{-1}(Mm)^{2(\na+1)}U^{2\na + 6\eta}G^{2\na}.
$$
\end{lemma}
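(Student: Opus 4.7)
My plan is to apply [BHB, Lemma 13] to $\alpo$, in exact parallel with the derivation of [BHB, Lemma 14] from [BHB, Lemma 13], but tracking the auxiliary modulus $m$ which is new to this setting. The three inputs required by [BHB, Lemma 13] are all in place: the equidistribution bound \eqref{eqn6.1} with $W_0=m^{\na+1}Q^{\na+1}U^{\na-1+\eta}$, which is simply Lemma \ref{lem10}; the pointwise bound $A_0\ll_\eta Q^3 U^\eta$ on $|\alpo|$, recorded just above the statement; and the mean-square bound $\sum_x|\alpo|^2\ll_\eta U^{\na+\eta}G^{2\na}$, which follows from Lemma \ref{lem11} together with \eqref{eq:mean1}. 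The function $\alpo$ is supported in a region of side length $N\asymp U^{\na/2}$.

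Before invoking [BHB, Lemma 13] one must check the side conditions \eqref{cond1} for all $Q\leq Q_0$. The assumption $G\leq U^{1/(\na+1)}$ is a hypothesis of the lemma itself. For the condition $Q^{\na+5}(Mm)^{\na+1}\leq U^{1-\eta}$, the bound $Q_0\leq U^{1/(\na+16)}$ gives $Q^{\na+5}\leq U^{(\na+5)/(\na+16)}$, leaving a margin of $U^{11/(\na+16)}$. Since $m\leq V^{7\xi}\leq U^{7\xi}$ and $M$ is a fixed constant (depending only on the initial data), choosing $\xi$ sufficiently small in terms of $\eta$ and $\na$ ensures $(Mm)^{\na+1}\leq U^{11/(\na+16)-\eta}$, and the hypothesis of Lemma \ref{lem11} is thereby met.

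With these inputs in hand and the side conditions verified, [BHB, Lemma 13] produces a bound of the shape $Q_0Q^{-1}$ times a combination of $W_0^2$ with the $L^2$-mass of $\alpo$, plus smaller terms dominated by the former. Rearranging then yields the claimed estimate $Q_0Q^{-1}(Mm)^{2(\na+1)}U^{2\na+6\eta}G^{2\na}$: the factor $(Mm)^{2(\na+1)}$ is precisely what one obtains by squaring the $W_0$-term and extracting $Q^{2(\na+1)}U^{2\na-2}$, while the extra $U^{\na}G^{2\na}$ records the $L^2$-mass of $\alpo$.

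The only genuine obstacle is arithmetic bookkeeping of the $m$-dependence, as the original [BHB] argument absorbs any fixed-$M$ factors into the implied constant. The numerical value $\na+16$ in the upper bound $Q_0\leq U^{1/(\na+16)}$ is chosen exactly so that, after multiplying $Q^{\na+5}$ against $(Mm)^{\na+1}$ for $m$ of size $V^{7\xi}$, the hypothesis $Q^{\na+5}(Mm)^{\na+1}\leq U^{1-\eta}$ of Lemma \ref{lem11} still survives with room to spare for a small $\eta>0$.
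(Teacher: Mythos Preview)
Your proposal is correct and follows essentially the same approach as the paper: collect the three inputs $W_0$, $A_0$, and the $L^2$-bound on $\alpo$ exactly as the paper does in the paragraphs preceding the lemma, then apply \cite[Lemma~13]{BHB} as a black box. Your explicit verification of the side condition $Q^{\na+5}(Mm)^{\na+1}\leq U^{1-\eta}$ via $m\leq V^{7\xi}\leq U^{7\xi}$ and $\xi$ small is in fact deferred by the paper to the proof of Lemma~\ref{lem16}, but it is the same computation.
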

%more precise computation: RHS\ll Q_0^{13}Q^{2\na +18}(Mm)^{\na +1}U^{2\na -2+6\eta} + Q_0Q^{-1}U^{2\na +\eta}G^{2\na}
We now consider the error term
$$
\cE_{m,\u^{(m)},\v^{(m)},\w^{(m)}}=
\sum_{x ,y\in \Lring }\alpo \beta_{m,\v^{(m)}}(y)\la_{m,\w^{(m)}}\big(\str(x,y)\big).
$$
As in \cite[Lemma 15]{BHB} the above estimates allow us to provide the following bound on $\cE_{m,\u^{(m)},\v^{(m)},\w^{(m)}}$.

\begin{lemma}\label{lem16}
Let $G=\log V$, let $m\leq V^{7\xi}$ and assume that
\begin{equation}\label{eqnQ1}
(\log V)^{16}\ll Q\ll \min\{ H^{4\na\nb/7}, H^{-4\na\nb}U^{8/(\na +16)}\}.\end{equation}
Then 
$$\cE_{m,\u^{(m)},\v^{(m)},\w^{(m)}}\ll_\eta Q^{-1/16} m^{(\na +1)/2}V_0^{2\na\nb +O(\eta)}H^{\na\nb} .$$
\end{lemma}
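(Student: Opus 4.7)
The plan is to adapt the proof of \cite[Lemma~16]{BHB}, taking particular care to track the dependence on the varying modulus $m$ (which was absorbed into a fixed constant $M$ in \cite{BHB}). The central object is the trilinear sum
\[
\cE_{m,\u^{(m)},\v^{(m)},\w^{(m)}}
=\sum_{y\in \Lring} \betmu \sum_{x\in \Lring} \alpo\, \la_{m,\w^{(m)}}(\str(x,y)),
\]
with $\alpo=\alpmu-\alpmuhat$, and the overarching strategy is to apply Cauchy--Schwarz in the outer variable $y$ and then to exploit the large-sieve cancellation for $\alpo$ furnished by Lemma~\ref{lem14}.

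\smallskip

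Concretely, after Cauchy--Schwarz,
\[
|\cE_{m,\u^{(m)},\v^{(m)},\w^{(m)}}|^2 \leq \Big(\sum_y |\betmu|^2\Big)\Big(\sum_y\Big|\sum_x \alpo\, \la_{m,\w^{(m)}}(\str(x,y))\Big|^2\Big).
\]
The first factor is handled by the mean-square bound \eqref{eq:mean1}, restricted to a progression modulo $mM$, which contributes a factor of size $(mM)^{-\na}V^{\na+\eta}$. For the second factor one expands the square into a sum over pairs $(x,x')$ and changes variables to the common value $l=\str(x,y)=\str(x',y)$; combined with the pointwise bound on $\la$ from Lemma~\ref{lem:upper-abl} and the bilinear sum estimate \cite[Lemma~15]{BHB}, this reduces matters to controlling an expression of the shape
\[
\sum_{q\leq Q_0} q^2 \sum_{y\bmod q}\max_{R}\Big|\sum_{\substack{x\in R\\ x\equiv y\bmod q}}\alpo\Big|^2,
\]
for some auxiliary parameter $Q_0$. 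This is precisely what Lemma~\ref{lem14} estimates, and optimising $Q_0$ (roughly $Q_0\asymp Q^{7/8}$) within the allowed range $Q_0\leq U^{1/(\na+16)}$ produces the claimed saving $Q^{-1/16}$ from the $(Q_0/Q)^{1/2}$ factor, while inserting \eqref{eq:UVW} converts the remaining size factors into $H^{\na\nb}V_0^{2\na\nb+O(\eta)}$.

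\smallskip

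The main obstacle lies in the bookkeeping of powers of $m$. Lemma~\ref{lem14} yields a factor $(Mm)^{2(\na+1)}$ under the square root, whose contribution combined with the $(mM)^{-\na}$ from the restricted mean-square of $\betmu$ must cancel down to exactly $m^{(\na+1)/2}$. This forces a careful tracking throughout the argument, in particular respecting the hypothesis $Q^{\na+5}(Mm)^{\na+1}\leq U^{1-\eta}$ from \eqref{cond1} when invoking Lemma~\ref{lem11}, and is the step which departs most sharply from the template in \cite[Lemma~16]{BHB}, where $M$ was fixed and the entire $M$-dependence could be hidden inside implied constants.
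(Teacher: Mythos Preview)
Your overall strategy is right in spirit, but the execution diverges from the paper's proof in a way that breaks the $m$-bookkeeping. The paper does \emph{not} perform a preliminary Cauchy--Schwarz in $y$; instead it applies \cite[Lemma~15]{BHB} directly to $\cE_{m,\u^{(m)},\v^{(m)},\w^{(m)}}$, with parameters $A\asymp U^{\na/2}$, $B\asymp V^{\na/2}$, $E\ll 1$ and an auxiliary $K$. That lemma already packages the Cauchy--Schwarz steps and outputs a bound in which the large-sieve quantity $T_3$ appears to the power $1/4$, not $1/2$. Your description ``expand the square into pairs $(x,x')$ and change variables to the common value $l=\str(x,y)=\str(x',y)$'' does not correspond to this reduction (for generic $x,x',y$ there is no such common value $l$), and a single Cauchy--Schwarz does not by itself land on the large-sieve sum of Lemma~\ref{lem14}.

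Because $T_3$ enters as $T_3^{1/4}$, the factor $(Mm)^{2(\na+1)}$ from Lemma~\ref{lem14} contributes precisely $m^{(\na+1)/2}$ on its own, with \emph{no} compensating $m$-saving from $\betmu$; the paper uses only the crude bound $\sum_y|\betmu|^2\ll V^{\na+\eta}$ implicit in \cite[Lemma~15]{BHB}. Your proposed cancellation between an $(mM)^{-\na}$ from a restricted mean-square of $\betmu$ (which is in any case not established in the paper) and $(Mm)^{2(\na+1)}$ under a square root yields $(Mm)^{\na/2+1}$, not $m^{(\na+1)/2}$, so the arithmetic does not close. For the record, the paper's optimisation is in $K$: one balances $K^{-1/2}$ against $K^{3/2}Q^{-1/4}$ (the latter arising after inserting the bound for $T_3$ into $T_3^{1/4}$), giving $K=Q^{1/8}$ and the saving $Q^{-1/16}$; the large-sieve range is $Q_0=CK$ with $C\asymp A/B\asymp H^{\na\nb/2}$, not $Q_0\asymp Q^{7/8}$.
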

%removed the proof as should be considered standard.

\section{Evaluation of the main term}\label{s:6}

% most of this is very close to \cite{BHB}. So I shortened this a lot (wanting to draw the attention of the reader to the singular series section which is more different) as the calculations are minor adaptions of \cite{BHB}. Again the original version is preserved in nqa v4 or v5

In this section we evaluate the main term contributing to the counting function 
\eqref{eq:gull}, which is given by
$$
\calM_{m, \u^{(m)},\v^{(m)},\w^{(m)}}:= \sum_{x,y\in \Lring} \alpmuhat \betmu \lam_{m,\w^{(m)}}(\str(x,y)).
$$
At this point we can still closely follow the arguments in \cite[\S 8]{BHB}. For this we first introduce some notation. We define
$$
F(\v,\w,\s):= 2^{-\kap}\left(\Tr_{L/\Q}(\del \nf_{\Ka/L}(\s)\nf_{\Ka/L}(\v))-c\nf_{\Kb/\Q}(\w)\right),
$$
where $\kap=1$ 
if $2\mid \Tr_{L/\Q}(\tau)$ and $\kap=0$ if $2\nmid \Tr_{L/\Q}(\tau)$.
Moreover, we introduce the notation
\begin{equation*}
a_1=\Tr_{L/\Q}(\nf_{\Ka/L}(\v)),\quad a_2=\Tr_{L/\Q}(\tau \nf_{\Ka/L}(\v))\quad \mbox{ and } \quad b= c\nf_{\Kb/\Q}(\w).
\end{equation*}
In approximating the sums in the main term with integrals one is lead to consider
$$I(\v,\w):= \int_{-\infty}^\infty \omemu (a_2x,-a_1x+b/a_2)\d x,$$
if $a_2\neq 0$. (If  $a_2=0$ and $a_1\neq 0$ then it is to be understood that one replaces the integrand by $\omemu(-a_2x+b/a_1,a_1x)$ in this definition.)\par
At the finite places one is lead to considering the system of congruences
\begin{equation}\label{eqnMm}
(\p,\q,\s)\equiv (\v^{(M,m)},\w^{(M,m)},\u^{(M,m)}) \mmod Mm,
\end{equation}
and
\begin{equation}\label{eqnF}
F(\p,\q,\s)\equiv 0\mmod u,\quad \mbox{ and }\quad l|\nf_{\Ka/L}(\p).
\end{equation}
We write $\Del=[Mm,u,l]$. For given positive integers $l,u$, we define the counting function $N_{M,m}(l,u)$ to be the number of tuples $(\p,\q,\s)$ modulo $\Del$ such that (\ref{eqnMm}) and (\ref{eqnF}) hold.

We now define the truncated singular series 
$$\tilde{\grS}^{(m)}(Q):=\sum_{q\leq Q}\sum_{l=1}^\infty\mu(l)\sum_{u|q}\frac{u\mu(q/u)}{\Del^{2\na+\nb}}N_{M,m}(l,u)$$
and the singular integral
\begin{equation}\label{eq:thrush}
\sig_\infty:=m^{\na}\sum_{\w\in \calW\cap\Z^{\nb}}\sum_{\v\in \calV\cap\Z^{\na}}I(\v,\w).
\end{equation}
With this notation, we obtain the following approximation for our main term. As the proof closely follows the arguments \cite[\S 8]{BHB}, we omit its details. 

\begin{lemma}\label{lem19}
Assume that 
$
H^{\na\nb/2}\leq V$ and $H^{\na\nb/2-\na/2}<V_0^{\na}.$
Then there is a constant $\vartet_1$ such that 
\begin{equation*}
\calM_{m,\u^{(m)},\v^{(m)},\w^{(m)}}= 2^{\kap}M^{\na}\sig_\infty \tilde{\grS}^{(m)}(Q)+O_\eta\big(m^{\vartet_1}Q^6H^{\na\nb/2}V_0^{2\na\nb+\eta}\big).
\end{equation*}
In this estimate one can take $\vartet_1=3\na+2\nb+1$. 
\end{lemma}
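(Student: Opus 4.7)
The plan is to substitute the asymptotic for $\cF(\Del)$ from Lemma~\ref{lem18} into the sum representation \eqref{eqnK} of $\calK_{l,u}(\s)$, feed the result into Lemma~\ref{lem17}, and finally extend the truncated $l$-sum to infinity. Summing the statement of Lemma~\ref{lem18} over $(\p,\q)\in \calT(l,u,\s)$, and then over $\s\equiv \u^{(M,m)}\mmod{Mm}$, one obtains a main term of shape $\Del^{-\na-\nb}\cF(1)N_{M,m}(l,u)$ after observing that $\sum_{\s}\card\calT(l,u,\s)=N_{M,m}(l,u)$ directly from the definitions. The remainder carries the same count $N_{M,m}(l,u)$, to which I would apply the bound \eqref{eqnN}, producing an additional factor of $m\Del^{2\na+\nb-1+\eta}/l$ in the error.

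Next, I would insert this into Lemma~\ref{lem17}. The main term then takes the shape
$$
2^{\kap}(Mm)^{\na}\cF(1)\sum_{l\leq K}\mu(l)\sum_{q\leq Q}\sum_{u\mid q}\frac{u\mu(q/u)N_{M,m}(l,u)}{\Del^{2\na+\nb}}.
$$
The identity $\sig_\infty=m^{\na}\cF(1)$, which is immediate from \eqref{eq:thrush} and the definition of $\cF(\Del)$ at $\Del=1$, collapses the prefactor to $2^{\kap}M^{\na}\sig_\infty$. Extending the outer sum over $l$ to infinity then produces $\tilde{\grS}^{(m)}(Q)$, and as indicated in the excerpt just before the lemma statement, the resulting tail is controlled by $E_2\ll mV_0^{2\na\nb}H^{\na\nb}Q^{1+\eta}K^{-1+\eta}$, using once again \eqref{eqnN} together with the divisor estimate $\sum_{q\leq Q}\sum_{u\mid q}u\ll Q^{2+\eta}$.

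Finally, I would choose $K=H^{\na\nb/2}/Q$. This balances the two pieces in $E_0$ against each other, produces a comparable bound for $E_2$, and at the same time ensures $KQ=H^{\na\nb/2}\leq \min\{V,W\}$, the size condition required by Lemma~\ref{lem18}: the inequality $H^{\na\nb/2}\leq V=V_0^{\nb}$ is exactly the first hypothesis of Lemma~\ref{lem19}, while $H^{\na\nb/2}\leq W=H^{\na/2}V_0^{\na}$ rearranges to the second. The principal delicacy is then the collapse of the factor $H^{\na\nb}V_0^{2\na\nb}(H^{-\na/2}V_0^{-\na}+V_0^{-\nb})$ inherited from Lemma~\ref{lem18} into $H^{\na\nb/2}V_0^{2\na\nb+\eta}$: the first summand is absorbed using $H^{\na\nb/2-\na/2}<V_0^{\na}$ and the second using $H^{\na\nb/2}\leq V_0^{\nb}$, so both hypotheses of the lemma are invoked and neither is redundant. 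The exponent $\vartet_1=3\na+2\nb+1$ arises by combining the weight $(Mm)^{\na}$ in Lemma~\ref{lem17}, the factor $m^{2\na+2\nb}$ in the error of Lemma~\ref{lem18}, and the additional $m$ coming from \eqref{eqnN}; the $M$-powers are absorbed into the implied constants since $M$ is fixed throughout.
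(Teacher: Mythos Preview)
Your proposal is correct and follows essentially the same approach as the paper: substitute Lemma~\ref{lem18} into \eqref{eqnK}, sum over $(\p,\q,\s)$ using \eqref{eqnN} to produce the error $E_1$, feed this into Lemma~\ref{lem17}, extend the $l$-sum via the tail bound $E_2$, and then set $K=H^{\na\nb/2}/Q$ so that $KQ\le\min\{V,W\}$ is equivalent to the two stated hypotheses. Your accounting of the exponent $\vartet_1=3\na+2\nb+1$ and of how each hypothesis absorbs one of the two summands coming from Lemma~\ref{lem18} matches the paper's computation precisely.
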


It is now time to reintroduce the summation  over $d,e,f,k$. 
Recalling the expression for $\calN_\xi (G,H,V)$ in  \eqref{eqn5.7}, and observing that 
$$\calN_{m,\u^{(m)},\v^{(m)},\w^{(m)}}=\calM_{m,\u^{(m)},\v^{(m)},\w^{(m)}} +
\calE_{m,\u^{(m)},\v^{(m)},\w^{(m)}},$$
we assume that $\xi$ is sufficiently small and that (\ref{eqnQ1}) holds.
Then it follows from  Lemma \ref{lem16} that
\begin{align*}
\calN_\xi(G,H,V)=~& 
\hspace{-0.1cm}\sum_{\substack{d,e,f,k\leq V^\xi \\ (defk,\Sn)=1}}
\hspace{-0.1cm}
\mu(d)\mu(e)\mu(f)\mu(k)
\hspace{-0.7cm}\sum_{(\u^{(m)},\v^{(m)},\w^{(m)})\in \cS(d,e,f,k)}
\hspace{-0.7cm}\calM_{m,\u^{(m)},\v^{(m)},\w^{(m)}}\\
&\quad +O_\eta\left(Q^{-1/16}V^{7\xi(\na+1)/2}V^{4\xi+7\xi(2\na+\nb)}V_0^{2\na\nb+O(\eta)}H^{\na\nb}\right),
\end{align*}
where we recall the notation \eqref{eq:Sn} for $\Sn$.
We now use Lemma \ref{lem19} to replace  $\calM_{m,\u^{(m)},\v^{(m)},\w^{(m)}}$ with the expected approximation, giving 
\begin{equation}\label{eqnNdel}
\begin{split}
\calN_\xi (G,H,V)&-
\calN_\xi^{(1)}(G,H,V)\\
\ll_\eta ~& Q^{-1/16}V^{7\xi(\na+1)/2}V^{4\xi+7\xi(2\na+\nb)}V_0^{2\na\nb+O(\eta)}H^{\na\nb}\\
&+V^{4\xi+7\xi(2\na+\nb)}V^{7\xi\vartet_1}Q^6H^{\na\nb/2}V_0^{2\na\nb+\eta},
\end{split}
\end{equation}
where 
\begin{align*}
\calN_\xi^{(1)}(G,H,V):=~& \sum_{\substack{d,e,f,k\leq V^\xi \\ (defk,\Sn)=1}}\mu(d)\mu(e)\mu(f)\mu(k)\hspace{-0.7cm}\\
&\quad \times\sum_{(\u^{(m)},\v^{(m)},\w^{(m)})\in \cS(d,e,f,k)}\hspace{-0.7cm}2^{\kap}M^{\na}\sig_\infty \tilde{\grS}^{(m)}(Q).
\end{align*}
We will be interested in triples of vectors $(\p,\q,\s)$ such that 
\begin{align}\label{eqnM}
(\p,\q,\s)\equiv (\v^{(M)},\w^{(M)},\u^{(M)}) \mmod M, \quad 
f^2\mid c\nf_{\Kb/\Q}(\q),\\
\label{defk}
[d^2,k]\mid N_{L/\Q}(\del \nf_{\Ka/L}(\s)),\quad
[e^2,k]\mid  N_{L/\Q}(\nf_{\Ka/L}(\p)D_L).
\end{align}
Letting $\Del:=[d^2,e^2,f^2,k,M,l,u]$, we define the counting function
\begin{equation}\label{eq:chick}
R(d,e,f,k,l,u):=\#\left\{(\p,\q,\s)\bmod \Del: 
\text{\eqref{eqnF}, \eqref{eqnM},  \eqref{defk} hold}\right\}.
\end{equation}
Then we have
\begin{equation}\label{defN1}
\begin{split}
\calN_\xi^{(1)}(G,H,V)=~&2^{\kap}M^{\na}\sig_\infty  \sum_{\substack{d,e,f,k\leq V^\xi \\ (defk,\Sn)=1}}\mu(d)\mu(e)\mu(f)\mu(k) \\ &\quad \times \sum_{q\leq Q}\sum_{l=1}^\infty\mu(l)\sum_{u|q}\frac{u\mu(q/u)}{\Del^{2\na+\nb}}R(d,e,f,k,l,u).
\end{split}
\end{equation}
We would next like to demonstrate the positivity of the sum over $d,e,f,k,l$ and $q$. This is hampered by the presence of the M\"obius functions and an important first step will be to show that the truncated sum in \eqref{defN1} can be completed.
This is the object of the next section.

\section{The singular series}\label{s:7}

In this section we continue our analysis of the sum \eqref{defN1} and then 
bring everything together in order to   complete the proof of Theorem 
 \ref{thm:2} (and so the proof of Theorem \ref{thm:1}).
We begin  with a simple upper bound for the counting function  \eqref{eq:chick}, which does not exploit the 
congruence condition \eqref{eqnF} modulo $u$, but is nonetheless useful when $u$ is fixed.

\begin{lemma}\label{lemR}
Let $\Del=[d^2,e^2,f^2,k,M,l,u]$ and let $\eta>0$. Then 
\begin{equation*}
R(d,e,f,k,l,u)\ll_\eta \frac{\Del^{2\na+\nb}}{[d^2,k]^{1-\eta}[l^2,e^2,k]^{1-\eta}f^{2(1-\eta)}}.
\end{equation*}
\end{lemma}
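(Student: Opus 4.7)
The strategy is to discard the constraints we can afford to lose. Since we want only an upper bound, we may drop both the coupling congruence $F(\p,\q,\s)\equiv 0\bmod u$ from \eqref{eqnF} and the congruence \eqref{eqnM} modulo $M$. What remains are the three divisibilities in \eqref{defk} together with $l\mid\nf_{K_1/L}(\p)$ in $\fo_L$, and these separate cleanly across $\s$, $\p$ and $\q$. Hence $R(d,e,f,k,l,u)$ is dominated by the product of three independent residue-class counts modulo $\Del$.

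The workhorse is the standard bound
\[
\#\{\x\in(\ZZ/D\ZZ)^n : D\mid \nf_{K/\QQ}(\x)\}\ll_{\eta,K} D^{n-1+\eta},
\]
valid for any number field $K$ of degree $n$ and any positive integer $D$. This is the residue-class analogue of Lemma~\ref{lem:M}: one bounds the number of $\x\in[0,D-1]^n$ with $D\mid \nf_{K/\QQ}(\x)$ by stratifying according to the value $m=\nf_{K/\QQ}(\x)$, where $|m|\ll D^n$ and $D\mid m$, and applies $r_K(m)\ll_\eta m^\eta$ exactly as in the proof of Lemma~\ref{lem:M}.

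It then remains to apply this to each coordinate in turn. For $\s$, the identity $N_{L/\QQ}(\delta\nf_{K_1/L}(\s))=N_{L/\QQ}(\delta)\,\nf_{K_1/\QQ}(\s)$ reduces the condition from \eqref{defk} to $[d^2,k]\mid \nf_{K_1/\QQ}(\s)$ up to a bounded factor, producing a count $\ll_\eta\Del^{\na}/[d^2,k]^{1-\eta}$ after lifting from modulus $[d^2,k]$ to $\Del$. For $\p$, the analogous move handles $[e^2,k]\mid N_{L/\QQ}(\nf_{K_1/L}(\p)D_L)$ using that $N_{L/\QQ}(D_L)=-D_L^2$ is a fixed nonzero integer, giving $[e^2,k]\mid \nf_{K_1/\QQ}(\p)$ up to a bounded factor; moreover $l\mid\nf_{K_1/L}(\p)$ in $\fo_L$ implies, on taking $N_{L/\QQ}$, that $l^2\mid \nf_{K_1/\QQ}(\p)$. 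Combining, $[l^2,e^2,k]\mid \nf_{K_1/\QQ}(\p)$ up to a bounded factor, yielding $\ll_\eta\Del^{\na}/[l^2,e^2,k]^{1-\eta}$. For $\q$, the condition $f^2\mid c\,\nf_{K_2/\QQ}(\q)$ gives $\ll_\eta\Del^{\nb}/f^{2(1-\eta)}$ in the same way. Multiplying the three bounds delivers the claim. The only real point of care is the residue-class bound for norm forms; absorbing the fixed constants $N_{L/\QQ}(\delta)$, $D_L^2$ and $c$ into the implied constant is harmless since these depend only on data fixed at the outset.
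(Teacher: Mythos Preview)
Your strategy is the same as the paper's: drop the coupling congruence modulo $u$ and the congruence modulo $M$, factor the remaining constraints into three independent norm-form divisibility counts for $\s$, $\p$, $\q$, and bound each by $\Del^{n}/D^{1-\eta}$ for the relevant $D$. Two points that the paper makes explicit and you elide deserve attention.

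First, your justification of the workhorse bound via the method of Lemma~\ref{lem:M} does not quite transfer: that lemma counts in a fundamental domain for $\fo_K/U_K^+$, where the representation function is controlled by $r_K(|m|)$, whereas in a box $[0,D)^n$ the number of $\x$ with $\nf_{K/\QQ}(\x)=m$ may be larger by a factor coming from units. The bound you state is nonetheless true; the paper simply cites \cite[Lemma~4.5]{BM} for it rather than rederiving it.

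Second, and more importantly, for the $\p$-count you assert $\ll_\eta\Del^{n_1}/[l^2,e^2,k]^{1-\eta}$ by ``lifting''. But $[l^2,e^2,k]$ need not divide $\Del$, since only $l$ (not $l^2$) enters $\Del$; so lifting from modulus $[l^2,e^2,k]$ to $\Del$ is not meaningful as written. The paper resolves this by first passing to the enlarged modulus $\Del'=[d^2,e^2,f^2,k,M,l^2,u]$: each residue $(\p,\q,\s)\bmod\Del$ satisfying the original constraints has all $(\Del'/\Del)^{2n_1+n_2}$ lifts modulo $\Del'$ still satisfying $l\mid\nf_{K_1/L}(\p)$, and on any such lift the implication $l^2\mid\nf_{K_1/\QQ}(\p)$ is now a well-defined residue condition. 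The prefactor $(\Del/\Del')^{2n_1+n_2}$ then cancels against the factor of $(\Del')^{2n_1+n_2}$ arising from \cite[Lemma~4.5]{BM}. Your argument is easily repaired along these lines, but as it stands the $\p$-step is incomplete.
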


\begin{proof}
Let $\Del':=[d^2,e^2,f^2,k,M,l^2,u]$. Then $R(d,e,f,k,l,u)$ is at most
\begin{equation*}
\left(\frac{\Del}{\Del'}\right)^{2\na+\nb} 
\hspace{-0.2cm}
\#
\left\{(\p,\q,\s)\bmod{\Del'}: 
 \begin{array}{l}
 ~[d^2,k]\mid N_{L/\Q}(\del)\nf_{\Ka/\Q}(\s)  \\
~[l^2,e^2,k] \mid  N_{L/\Q}(D_L)\nf_{\Ka/\Q}(\p) \\
~f^2\mid c\nf_{\Kb/\Q}(\q) 
 \end{array}
\right\}.
\end{equation*}
The cardinality  on the right hand side factors into three independent counting functions for $\p$, $\q$ and $\s$. By  \cite[Lemma 4.2]{BM} we have
\begin{align*}
\card\{\p\mmod \Del': [l^2,e^2,k]\mid &N_{L/\Q}(D_L)\nf_{\Ka/\Q}(\p)\}
\hspace{-0.1cm}
\ll_\eta 
\hspace{-0.1cm}\left(\frac{\Del'}{[l^2,e^2,k]}\right)^{\na} 
\hspace{-0.3cm}
[l^2,e^2,k]^{\na-1+\eta}.
\end{align*}
In a similar way one can estimate the contributions from $\s$ modulo $\Del'$ with $[d^2,k]\mid N_{L/\Q}(\del)\nf_{\Ka/\Q}(\s)$, and $\q$ with $f^2\mid c\nf_{\Kb/\Q}(\q)$. Together these bounds imply 
\begin{equation*}
R(d,e,f,k,l,u)\ll_\eta \frac{\Del^{2\na+\nb}}{[d^2,k]^{1-\eta}[l^2,e^2,k]^{1-\eta}f^{2(1-\eta)}},
\end{equation*}
as desired.
\end{proof}

In order to interpret the singular series we will need to complete the summation over $q$. Estimating the sum over $u\mid q$ trivially by taking absolute values of $\mu(q/u)$ does not suffice, even if the right decay for $R(d,e,f,k,l,u)$ is established, since the resulting majorant would not be convergent. Hence we proceed by consider the function
$$
g(d,e,f,k,l,q):= \sum_{u|q} \frac{u\mu(q/u)}{\Del^{2\na+\nb}}R(d,e,f,k,l,u),
$$
where $R(d,e,f,k,l,u)$ is given by \eqref{eq:chick}.
Note that $g$ and $R$ implicitly depend on $M$. When we need to articulate the  specific value of $M$, we will  write $g_M$ instead of $g$ and $R_M$ instead of $R$.

\subsection{Estimation of $g$}

Assume that we can write $d=d_1d_2$, $e=e_1e_2$, $f=f_1f_2$, $k=k_1k_2$, $l=l_1l_2$, $q=q_1q_2$ and $M=M_1M_2$, with
$$(d_1e_1f_1k_1l_1M_1q_1,d_2e_2f_2k_2l_2M_2q_2)=1.$$
Then the function $g$ has the multiplicativity property
$$g_M(d,e,f,k,l,q)= g_{M_1}(d_1,e_1,f_1,k_1,l_1,q_1)g_{M_2}(d_2,e_2,f_2,k_2,l_2,q_2).$$
Hence it is sufficient to study $g_M(d,e,f,k,l,q)$ for values of $M,d,e,f,k,l,q$ which are all powers of the same prime number $p$. Let $q=p^\alp$ and assume now that $M,d,e,f,k,l$ are powers of $p$. We write $\nu_p=\val_p$ for the standard $p$-adic valuation on $\Q$. In our study of the function $g$, we first consider the case where 
\begin{equation}\label{eqnalp}
\alp+1\geq 2\max\{\nu_p([d^2,k]), \nu_p([e^2,k]), \nu_p(f^2),\nu_p(l),\nu_p(M), 3/2\}.
\end{equation}
For $\alpha$ in this range we have 
\begin{equation}\label{eqng}
\begin{split}
g_M(d,e,f,k,l,p^\alp)=~& \frac{p^\alp}{p^{\alp(2\na+\nb)}}R_M(d,e,f,k,l,p^\alp)\\ &\quad- \frac{p^{\alp-1}}{p^{(\alp-1)(2\na+\nb)}}R_M(d,e,f,k,l,p^{\alp-1}).
\end{split}
\end{equation}
In the following we write $\bfx$ for the vector $(\p,\q,\s)$ and define $F(\bfx):=F(\p,\q,\s)$. Then \eqref{eq:chick} becomes
$$
R_M(d,e,f,k,l,p^\alp)= \card\left\{\bfx\bmod p^\alp: 
\begin{array}{l}
F(\bfx)\equiv 0 \bmod p^\alp, ~l\mid \nf_{\Ka/L}(\p)\\ 
 \text{(\ref{eqnM}), (\ref{defk})  hold}
\end{array}{}
\right\}.
$$
For $\tau<\alp$ we define the counting function
$$
R_M^\tau(d,e,f,k,l,p^\alp)= \card\left\{\bfx\bmod p^\alp: 
\begin{array}{l}
F(\bfx)\equiv 0 \bmod p^\alp, ~l\mid \nf_{\Ka/L}(\p)\\ 
 \text{$p^\tau\|\nabla F(\bfx)$ and (\ref{eqnM}), (\ref{defk})  hold}
\end{array}{}
\right\}.
$$
Then 
$R_M(d,e,f,k,l,p^\alp)=\sum_{\tau=0}^{\alp-1}R_M^{\tau}(d,e,f,k,l,p^\alp)+O(T(p^\alpha)),$
where
\begin{equation}\label{eq:egg}
T(p^t):=\card\{\bfx\mmod p^t: p^t|\nabla F(\bfx)\}.
\end{equation}

We now apply Hensel's lemma (in the form   \cite[Lemma 3.3]{BM}, for example) 
in order to compare $R_M(d,e,f,k,l,p^\alp)$ and $R_M(d,e,f,k,l,p^{\alp-1})$. As soon as $\alp-1\geq 2\tau+1$ and 
\begin{equation}\label{eq:rain}\alp-1\geq \tau +\max\{\nu_p([d^2,k]), \nu_p([e^2,k]), \nu_p(f^2),\nu_p(l),\nu_p(M)\},
\end{equation}
we have
$p^{2\na+\nb-1}R_M^{\tau}(d,e,f,k,l,p^{\alp-1})= R_M^{\tau}(d,e,f,k,l,p^\alp).$
Note that the inequality $\alp-1\geq 2\tau+1$ is equivalent to saying that $\tau\leq \alp/2-1$,
from which it is clear that \eqref{eqnalp} implies \eqref{eq:rain}.
 Together with equation (\ref{eqng}) we obtain
\begin{equation}\label{eqng2}
\begin{split}
g_M(d,e,f,k,l,p^\alp)
&\ll \frac{p^\alp}{p^{\alp(2\na+\nb)}}\left(\frac{p^\alp}{p^{\lfloor \frac{\alp}{2}\rfloor}}\right)^{2\na+\nb} T(p^{\lfloor \frac{\alp}{2}\rfloor}),
\end{split}
\end{equation}
in the notation of \eqref{eq:egg}.
The function $T(p^t)$ has already been studied in \cite[\S 9]{BHB}, where the problem is reduced to the analysis of 
 two independent counting functions $T_1$ and $T_2$, in  such a way that $T(p^t)\leq T_1(p^t)T_2(p^t)$. It follows from  \cite[Eq.~(9.13) and p.~1186]{BHB} that 
$$
T_1(p^t)\ll (4t+1)^{2\na}p^{2\na t-\lceil2t\na/(\na-1)\rceil}
\text{ and }
T_2(p^t)\ll (2t+1)^{\nb} p^{\nb t - \lceil t\nb /(\nb-1)\rceil}.
$$
This gives
$$T(p^t)\ll (4t+1)^{2\na+\nb} p^{(2\na +\nb)t-3t-2},$$
since $\lceil \theta \rceil\geq m+1$ for any real number $\theta$ bigger than an integer $m$.
If we use this bound in (\ref{eqng2}),  we obtain the estimate
$$g_M(d,e,f,k,l,p^\alp)\ll (2\alp+1)^{2\na+\nb}p^{\alp-3\lfloor\frac{\alp}{2}\rfloor -2}.$$
By multiplicativity of $g$ and using trivial bounds for $R_M(d,e,f,k,l,1)$, this estimate continues to hold for any positive integers $M,d,e,f,k,l$, which are not necessarily powers of $p$. We state this result in the following lemma.

\begin{lemma}\label{g1}
Assume that \eqref{eqnalp} holds. Then 
$$g_M(d,e,f,k,l,p^\alp)\ll \alp^{2\na+\nb}p^{\alp-3\lfloor\frac{\alp}{2}\rfloor -2}.$$
\end{lemma}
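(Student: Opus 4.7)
The plan is to follow the chain of reductions already laid out in the preceding paragraphs and verify that each step indeed produces the claimed bound. First I would invoke the multiplicativity property of $g_M$ to reduce to the case where $M,d,e,f,k,l$ are all powers of a single prime $p$ and $q = p^\alpha$. In this local setting, since $\mu(q/u)$ vanishes unless $q/u\in\{1,p\}$, the definition of $g$ collapses to the two-term expression \eqref{eqng}, a difference involving $R_M(\cdot, p^\alpha)$ and $R_M(\cdot, p^{\alpha-1})$.

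Next I would stratify $R_M(\cdot, p^\alpha)$ according to the exact $p$-adic valuation $\tau$ of $\nabla F(\bfx)$, writing
\[
R_M(\cdot, p^\alpha) = \sum_{\tau=0}^{\alpha-1} R_M^\tau(\cdot, p^\alpha) + O(T(p^\alpha)),
\]
with $T$ as in \eqref{eq:egg} absorbing the truly singular locus. The key analytic input is Hensel's lemma in the form of \cite[Lemma 3.3]{BM}: when $\tau \leq \alpha/2 - 1$, and the congruence conditions are tame enough relative to $\alpha$ (which is guaranteed by hypothesis \eqref{eqnalp}), lifts from level $\alpha-1$ to level $\alpha$ are rigid and we obtain the clean identity $p^{2\na+\nb-1}R_M^\tau(\cdot,p^{\alpha-1}) = R_M^\tau(\cdot, p^\alpha)$. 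Inserting this into \eqref{eqng} causes all strata with $\tau \leq \alpha/2 - 1$ to cancel exactly, leaving only the singular contribution $T(p^{\lfloor \alpha/2 \rfloor})$ up to the normalisation factor $p^{\alpha(1-(2\na+\nb))}\cdot (p^{\alpha-\lfloor\alpha/2\rfloor})^{2\na+\nb}$.

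Finally I would estimate $T(p^t)$. Here the variables $\p$, $\q$, $\s$ enter $\nabla F$ through essentially independent norm forms, so $T(p^t) \leq T_1(p^t)T_2(p^t)$, and the individual bounds $T_1(p^t)\ll (4t+1)^{2\na}p^{2\na t - \lceil 2t\na/(\na-1)\rceil}$ and $T_2(p^t)\ll (2t+1)^{\nb}p^{\nb t -\lceil t\nb/(\nb-1)\rceil}$ are taken verbatim from \cite[\S 9]{BHB}. Using the elementary fact that $\lceil \theta\rceil \geq m+1$ for $\theta > m \in \Z$, the exponent savings combine to $3t + 2$ beyond the trivial size $p^{(2\na+\nb)t}$. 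Substituting $t = \lfloor \alpha/2\rfloor$ into \eqref{eqng2} yields the stated bound in the prime-power case.

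The main obstacle—and what makes hypothesis \eqref{eqnalp} necessary—is ensuring that the Hensel lifting hypothesis $\alp-1\geq \tau+\max\{\nu_p([d^2,k]),\nu_p([e^2,k]),\nu_p(f^2),\nu_p(l),\nu_p(M)\}$ actually holds throughout the relevant range $\tau \leq \alp/2-1$; this is precisely the role of the factor of $2$ in \eqref{eqnalp}. Once the prime-power bound is in hand, multiplicativity of $g$ together with trivial estimates for the factors $R_M(\cdot,1)$ at primes outside the support promotes the estimate to arbitrary positive integers $M,d,e,f,k,l$, completing the lemma.
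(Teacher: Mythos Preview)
Your proposal is correct and follows essentially the same approach as the paper: the argument preceding the lemma statement is exactly the chain you describe---reduction to prime powers via multiplicativity, the two-term formula \eqref{eqng}, stratification by $\tau=\val_p(\nabla F)$, Hensel cancellation of the strata $\tau\leq \alpha/2-1$ under \eqref{eqnalp}, and the bound on the surviving singular contribution via $T(p^{\lfloor\alpha/2\rfloor})\leq T_1T_2$ from \cite[\S 9]{BHB}. Your identification of the role of \eqref{eqnalp} in securing \eqref{eq:rain} throughout the cancelled range is also precisely what the paper uses.
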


We will  use this lemma later to handle  cases where $\alp\geq 3$, or $\alp\geq 2$ and $p\nmid def$. 
It remains to bound the function $g$ in situations where $d,e,f,k,l$ are square-free, with either  $\alp=1$, or $\alp=2$ and $p\mid def$. We begin by dealing with the latter situation. 

\begin{lemma}\label{g2}
Let  $\eta >0$. 
If $p\mid def$ then 
$
g_M(d,e,f,k,l,p^2)\ll_\eta p^{-2+\eta}.
$
\end{lemma}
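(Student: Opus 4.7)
The plan is to adapt the approach behind Lemma~\ref{g1}---stratifying by the $p$-adic valuation $\tau$ of $\nabla F$ and applying Hensel's lemma on each stratum---to the regime in which hypothesis \eqref{eqnalp} fails. First, by the multiplicativity of $g_M$ recorded above, I reduce to the local case $d=p^{a_d},\,e=p^{a_e},\,f=p^{a_f},\,k=p^{a_k},\,l=p^{a_l},\,M=p^{a_M}$ for a single prime $p$, with $a_d,a_e,a_f,a_k,a_l\in\{0,1\}$ by square-freeness and $\max(a_d,a_e,a_f)=1$ by hypothesis. If $a_M\geq 1$ then the M-congruence \eqref{eqnM} is already restrictive enough that an elementary estimate yields $g_M(\cdot,p^2)\ll p^{-2}$; I may therefore assume $a_M=0$, and by the symmetry of the three divisibility conditions in \eqref{defk} it suffices to treat the representative subcase $a_d=1$, $a_e=a_f=a_k=a_l=0$, in which $\Delta=p^2$ and the square-freeness condition becomes $p^2\mid\nf_{K_1/\Q}(\s)$.

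I write $g_M(\cdot,p^2)=p^{-2(2n_1+n_2)}\bigl(p^2R(\cdot,p^2)-p\,R(\cdot,p)\bigr)$ and stratify $R(\cdot,p^\alpha)=R^0(\cdot,p^\alpha)+R^{\geq 1}(\cdot,p^\alpha)$ according to whether $p\nmid\nabla F(\bfx)$. The bound $T(p)\ll p^{2n_1+n_2-5}$ obtained in the proof of Lemma~\ref{g1} shows that $R^{\geq 1}$ contributes at most $O(p^{-3})$ to $g_M$. For $R^0$ I parametrize $\bfx=\bfx_0+p\h$ with $\bfx_0,\h\bmod p$, so that the lifted equation $F(\bfx)\equiv 0\bmod p^2$ and the square-freeness condition $p^2\mid\nf_{K_1/\Q}(\s)$ become two affine-linear forms in $\h\bmod p$:
$$
H(\h)=F(\bfx_0)/p+\h\cdot\nabla F(\bfx_0),\qquad G(\h)=\nf_{K_1/\Q}(\s_0)/p+\h_3\cdot\nabla\nf_{K_1/\Q}(\s_0).
$$
A short case analysis---splitting according to whether $G$ is identically zero, a nonzero constant, or a nontrivial linear form in $\h_3$, and whether $H$ is proportional to $G$---shows that the local Hensel count satisfies $p^2T_0(\bfx_0)-p\,S_0(\bfx_0)=0$ whenever $G$ and $H$ are linearly independent as affine forms on $\h$. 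The surviving contribution therefore arises from the degenerate locus
$$
\mathcal{N}_{\mathrm{deg}}=\bigl\{\bfx_0\bmod p:\ F(\bfx_0)\equiv 0\bmod p,\ \nabla_\p F\equiv\nabla_\q F\equiv 0\bmod p,\ p\mid\nf_{K_1/\Q}(\s_0)\bigr\},
$$
on which $H$ is forced to be a scalar multiple of $G$.

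The main step is to establish $|\mathcal{N}_{\mathrm{deg}}|\ll_\eta p^{2n_1+n_2-3+\eta}$. The condition $\nabla_\q F\equiv 0\bmod p$ forces $\q_0$ into the singular locus of the norm form $\nf_{K_2/\Q}$, of codimension at least two, contributing $\ll p^{n_2-2}$. For the $(\p_0,\s_0)$ part one splits into the ``bad'' locus $\{\s_0:\,p\mid\nf_{K_1/L}(\s_0)\text{ in }\fo_L\}$, on which $\nabla_\p F$ vanishes identically in $\p_0$ but which, since $p$ is unramified in $L$, has codimension at least two in the $\s_0$-space, and its complement, on which $\nabla_\p F\equiv 0$ imposes $n_1$ independent conditions on $\p_0$; in either case the admissible $(\p_0,\s_0)$ number at most $\ll p^{2n_1-1+\eta}$, giving the claimed bound on $|\mathcal{N}_{\mathrm{deg}}|$. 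Since each $\bfx_0\in\mathcal{N}_{\mathrm{deg}}$ contributes at most $p^{2n_1+n_2+1}$ to $|p^2R^0(\cdot,p^2)-p\,R^0(\cdot,p)|$, the total is $\ll p^{4n_1+2n_2-2+\eta}$, yielding $g_M(\cdot,p^2)\ll_\eta p^{-2+\eta}$. The subcases $a_e=1$ and $a_f=1$ are handled analogously, the $a_f=1$ case being simplest because $\nabla_\q F=-c\nabla\nf_{K_2/\Q}(\q_0)$ is automatically parallel to the gradient of the square-freeness condition. The principal obstacle is the careful treatment of the ``bad'' locus, on which the naive transverse intersection estimates fail and the unramified splitting behaviour of $p$ in $L$ must be invoked to secure the required codimension two.
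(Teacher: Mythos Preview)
Your approach is considerably more elaborate than the paper's and, as written, contains genuine gaps.

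\textbf{The paper's argument.} The paper never exploits cancellation in the difference $p^2R(\cdot,p^2)-pR(\cdot,p)$ except in the single subcase $p\|f$, $p\nmid dekl$. Instead it bounds both $R$-terms individually. When at least two of the constraints in \eqref{defk} (together with $l\mid\nf_{K_1/L}(\p)$) are active at $p$, Lemma~\ref{lemR} already gives $R/\Delta^{2n_1+n_2}\ll_\eta p^{-4+\eta}$, whence $|g|\ll_\eta p^{-2+\eta}$ by the triangle inequality. In the remaining single-divisibility cases, e.g.\ $p\mid d$, $p\nmid ef$, the paper drops the conditions on $\p$ entirely, counts $\s$ with $p^2\mid\nf_{K_1/\Q}(\s)$ via \cite[Lemma~4.5]{BM} to save $p^{2-\eta}$, and then for fixed $(\p,\s)$ counts $\q$ with $c\nf_{K_2/\Q}(\q)$ in a fixed residue class $\bmod\,u$, saving a further $u^{1-\eta}$. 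This yields $R(\cdot,p^\alpha)\ll_\eta\Delta^{2n_1+n_2}p^{-2-\alpha+\eta}$ and hence $|g|\ll_\eta p^{-2+\eta}$ directly. Only the case $p\|f$, $p\nmid dekl$ needs a Hensel step, and there it is a short two-line computation.

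\textbf{Gaps in your reduction.} Your claim that ``by symmetry it suffices to treat the representative subcase $a_d=1$, $a_e=a_f=a_k=a_l=0$'' is not justified. The three constraints in \eqref{defk} involve different norm forms and different variable blocks, so there is no symmetry among $d,e,f$; and nothing you have written explains why $a_k$ and $a_l$ may be set to zero. Since $g$ is a \emph{signed} difference, imposing additional divisibility conditions does not automatically improve the bound on $|g|$, so the ``worst case is the minimal one'' heuristic requires proof. The paper disposes of all non-minimal configurations in one stroke via Lemma~\ref{lemR}; you omit this step. Likewise, the assertion that $a_M\geq 1$ is handled by ``an elementary estimate'' is not substantiated.

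\textbf{A specific error in the degenerate-locus count.} On the complement of your ``bad'' locus you assert that $\nabla_\p F\equiv 0$ imposes $n_1$ independent conditions on $\p_0$. This is false. When $p$ splits in $L$ and, say, $N_1(\s_0)=0$, $N_2(\s_0)\neq 0$, one has $\nabla_\p F\equiv 2^{-\kappa}\delta_2 N_2(\s_0)\,\nabla_\p N_2(\p_0)$, so the condition is $\nabla_\p N_2(\p_0)\equiv 0$, which cuts out the singular locus of a degree-$n_1/2$ norm form---codimension two, not $n_1$. Your stated bound $|\mathcal N_{\mathrm{deg}}|\ll p^{2n_1+n_2-3+\eta}$ happens to survive (indeed one gets $p^{2n_1+n_2-4}$), but the reasoning as written is incorrect.

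In short: the Hensel stratification can be made to work, but you have not carried out the case analysis required to justify it, and the paper shows that no such machinery is needed---crude upper bounds on each $R$-term already suffice.
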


\begin{proof}
Suppose first that either $p^2\mid de$, or $p\mid f$ and $p\mid dekl$, or $p^2\mid f$. We use the definition of $g$ in combination with the bound from Lemma \ref{lemR} to deduce that
\begin{equation*}
\begin{split}
g_M(d,e,f,k,l,p^2)&\ll p^2\frac{R_M(d,e,f,k,l,p^2)}{\Del(p^2)^{2\na+\nb}}+p\frac{R_M(d,e,f,k,l,p)}{\Del(p)^{2\na+\nb}} \\
&\ll_\eta p^2 p^{-4(1-\eta/4)}\ll_\eta p^{-2+\eta}.
\end{split}
\end{equation*}
Here we write $\Del(p^2)$ and $\Del(p)$ instead of $\Del$ to indicate the dependence on $u$.

Next we consider the case $p\mid d$ and $p\nmid ef$. We take a similar strategy and note that  the counting function $R_M(d,e,f,k,l,u)$ is 
\begin{equation*}
\leq  \card\left\{(\p,\q,\s)\bmod \Del: 
\begin{array}{l}
p^2\mid N_{L/\Q}(\del)\nf_{\Ka/\Q}(\s), ~F(\p,\q,\s)\equiv 0\bmod{u}
\end{array}{}
\right\}.
\end{equation*}
In order to bound this counting function, we first count the number of possible choices for $\s$ modulo $\Del$. By  \cite[Lemma 4.2]{BM} one has 
$$
\card\{\s\mmod \Del: p^2\mid N_{L/\Q}(\del)\nf_{\Ka/\Q}(\s)\}\ll_\eta \frac{\Del^{\na}}{p^{2-\eta/2}}.
$$
Now consider $\s$ and $\p$ fixed and ask  how many choices for $\q$ arise in the function $R_M(d,e,f,k,l,u)$. 
Again, we may apply  \cite[Lemma 4.2]{BM}, concluding that
$$R(d,e,f,k,l,p^\alpha)\ll_\eta \frac{\Del^{\na}}{p^{2-\eta/2}} \Del^{\na} \frac{\Del^{\nb}}{p^{\alpha-\eta/2}},$$
for $\alpha\in \{1,2\}$.
Hence the desired bound holds for  $p\mid d$.
The case $p\mid e$ may be treated in the same way.

It remains to bound the function $g$ in the case $p\| f$ and $p\nmid dekl$. 
Since $(f,\Sn)=1$ we must have $p\nmid 2M$.
Hence we need to analyse $g_1(1,1,p,1,1,p^2)$. 
We proceed similarly to  the proof of Lemma \ref{g1}. Let 
$$G(\s,\p):=\Tr_{L/\Q}(\del \nf_{\Ka/L}(\s) \nf_{\Ka/L}(\p))$$ 
and define the counting function
$$\tilde{R}(u):=\card\{\s,\p\mmod u: G(\s,\p)\equiv 0\mmod u\}.$$
Then $g_1(1,1,p,1,1,p^2)$ is
\begin{equation*}
\begin{split}
&= p^{-2(2\na+\nb)}\big(p^2R_1(1,1,p,1,1,p^2)-p R_1(1,1,p,1,1,p)\big) \\
&= p^{-2(2\na+\nb)}\card\{\q\mmod p^2: p^2\mid c\nf_{\Kb/\Q}(\q)\} \big(p^{2}\tilde{R}(p^2)-p^{1+2\na}\tilde{R}(p)\big).
\end{split}
\end{equation*}
Again we use  \cite[Lemma 4.2]{BM}
to control the first counting function,  obtaining 
\begin{equation}\label{h1}
g_1(1,1,p,1,1,p^2)\ll_\eta p^{-2+\eta}\big(p^{2-4\na}\tilde{R}(p^2)-p^{1-2\na}\tilde{R}(p)\big),
\end{equation}
for any $\eta>0$.
We further split the counting function $\tilde{R}(u)$ on prime powers $u=p^\alp$ into two parts. Let 
$$\tilde{R}_1(p^\alp):= \card\{\s,\p\mmod p^\alp: G(\s,\p)\equiv 0\mmod p^\alp,\ p\nmid \nabla G(\s,\p)\}.$$
According to \eqref{h1} we have 
\begin{equation*}
\begin{split}
g_1(1,1,p,1,1,p^2)\ll_\eta~& 
p^{-2+\eta}\big(p^{2-4\na}\tilde{R}_1(p^2)-p^{1-2\na}\tilde{R}_1(p)\big)
\\
& \quad +p^{-2+\eta}p^{2-2\na}\card\{\s,\p \mmod p: p\mid \nabla G(\s,\p)\}.
\end{split}
\end{equation*}
If we define $T_1(p^\tau)$ as in \cite[\S 9]{BHB}, then we see that
\begin{equation*}
\begin{split}
p^{2-2\na}\card\{\s,\p & \mmod p: p\mid \nabla G(\s,\p)\}\ll p^{2-2\na}T_1(p)
\ll p^{2-\lceil 2\na/(\na-1)\rceil}\ll p^{-1},
\end{split}
\end{equation*}
and hence the second term is bounded by $O_\eta (p^{-3+\eta})$.
Finally, it follows from \cite[Lemma~3.3]{BM} that
$p^{2-4\na}\tilde{R}_1(p^2)=p^{1-2\na}\tilde{R}_1(p),$
whence in this case 
$g_1(1,1,p,1,1,p^2)\ll_\eta p^{-3+\eta}$.
This completes the proof of the lemma.
\end{proof}

Finally  we turn to the case $\alp=1$.

\begin{lemma}\label{g3}
Let  $\eta>0$. Then 
$g_M(d,e,f,k,l,p)\ll_\eta p^{-2+\eta}.$
\end{lemma}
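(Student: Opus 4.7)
\medskip
\noindent\textbf{Proof plan.} The plan is to adapt the case analysis used in the proof of Lemma \ref{g2} to the setting $\alp = 1$. By the multiplicativity of $g_M$ in its arguments, it suffices to handle the case where $d, e, f, k, l, M$ are all powers of $p$; since $d, e, f, k, l$ are square-free, each lies in $\{1, p\}$. We write
$$g_M(d,e,f,k,l,p) = \frac{p R_M(d,e,f,k,l,p)}{\Del(p)^{2\na+\nb}} - \frac{R_M(d,e,f,k,l,1)}{\Del(1)^{2\na+\nb}},$$
and distinguish cases according to which of $d, e, f, k, l$ is divisible by $p$.

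When $p$ divides two or more of $d, e, f, k$, Lemma \ref{lemR} yields $R_M\ll_\eta \Del^{2\na+\nb}p^{-4+\eta}$ directly, and the bound follows from the crude estimate $|g_M|\ll pR_M/\Del^{2\na+\nb}$. The delicate cases are when $p$ divides exactly one of $d, e, f$, or one of $k, l$. Focusing for concreteness on $p\mid f$, the constraint $p^2\mid c\nf_{\Kb/\Q}(\q)$ restricts $\q$ to a fraction $O_\eta(p^{-2+\eta})$ of residues modulo $p^2$ by \cite[Lemma~4.5]{BM}. Mirroring the end of the proof of Lemma \ref{g2}, we factor out the $\q$-residues and are left with the cancellation $pR(p)-R(1)$ between character sums in $(\p,\s)$ governed by the bilinear form $\tr_{L/\Q}(\del\nf_{\Ka/L}(\s)\nf_{\Ka/L}(\p))$ modulo $p$. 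Applying Hensel's lemma \cite[Lemma~3.3]{BM} on the smooth locus together with the singular-locus bounds $T_1(p)T_2(p)\ll p^{\na+\nb-3}$ from \cite[\S 9]{BHB} produces an additional $p$-adic saving of order $p^{-1+\eta}$, so that $|g_M|\ll p^{-3+\eta}$ here. The remaining subcases ($p\mid d$, $p\mid e$, $p\mid k$, or $p\mid l$) are handled analogously, with constraints of the form $p^2\mid \nf_{\Ka/\Q}(\s)$ or $p\mid \nf_{\Ka/\Q}(\p)$ playing the role previously played by $p^2\mid \nf_{\Kb/\Q}(\q)$.

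When $p\nmid defklM$ we have $\Del(1)=1$ and $\Del(p)=p$, so $R_M(\ldots,1)=1$ and $R_M(\ldots,p)=N_0(p):=\#\{\bfx\bmod p : F(\bfx)\equiv 0\bmod p\}$, giving
$$g_M(1,1,1,1,1,p) = p^{-(2\na+\nb-1)}\big(N_0(p)-p^{2\na+\nb-1}\big).$$
Using the decomposition $F = 2^{-\kap}(A-c\nf_{\Kb/\Q}(\q))$ with $A(\s,\p):=\tr_{L/\Q}(\del\nf_{\Ka/L}(\s)\nf_{\Ka/L}(\p))$, a Fourier expansion over $\FF_p$ yields
$$N_0(p) - p^{2\na+\nb-1} = p^{-1}\sum_{t\in\FF_p^*}S_A(t)\,S_B(t),$$
where $S_A(t) = \sum_{\s,\p\bmod p}e_p(tA(\s,\p))$ and $S_B(t) = \sum_{\q\bmod p}e_p(-ct\nf_{\Kb/\Q}(\q))$ after rescaling $t$. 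Standard Gauss-sum estimates give $|S_B(t)|\ll p^{\nb/2}$ for $\nb\geq 2$ (the case $\nb=1$ is trivial, since then $S_B(t)\equiv 0$ for $t\neq 0$), while non-degeneracy modulo $p$ of the trace-norm bilinear pairing at any $p\notin S\cup\{v_0\}$ yields $|S_A(t)|\ll p^{\na}$. These bounds combine to give $|g_M|\ll p^{-\na-\nb/2+1+\eta}\leq p^{-2+\eta}$, as required.

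The main obstacle is the character-sum estimate for $S_A(t)$: one must verify that the trace-norm bilinear form remains non-degenerate modulo $p$ for every $p\notin S\cup\{v_0\}$, but this is guaranteed since the finite set of primes where it could degenerate is already absorbed into $S$ under the standing hypothesis that $S$ contains the primes dividing $2acN_{L/\Q}(\del)$ and those ramifying in $K_1$ or $K_2$.
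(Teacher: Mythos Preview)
Your case breakdown and the reduction via multiplicativity are correct, and the spirit of the argument is close to the paper's.  However, the treatment of the case $p\nmid defklM$ contains a genuine error.  You claim that ``standard Gauss-sum estimates give $|S_B(t)|\ll p^{\nb/2}$''.  This is false: the norm form $\nf_{\Kb/\Q}$ does not define a smooth hypersurface over $\FF_p$, and the complete exponential sum can be much larger.  For instance, when $\nb=3$ and $p$ splits completely in $\Kb$, the norm form is equivalent to $q_1q_2q_3$ over $\FF_p$, and one computes
\[
\sum_{q_1,q_2,q_3\in\FF_p} e_p(tq_1q_2q_3)=p(2p-1)\sim 2p^2,
\]
which is far from $p^{3/2}$.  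Similarly, your justification of $|S_A(t)|\ll p^{\na}$ via ``non-degeneracy of the trace-norm bilinear pairing'' is wrong for $\na>2$: the form $A(\s,\p)=\tr_{L/\Q}(\del\nf_{\Ka/L}(\s)\nf_{\Ka/L}(\p))$ has degree $\na/2$ in each block of variables and is not bilinear.  The paper avoids these difficulties by not factoring the exponential sum at all; it simply counts $\FF_p$-points on the hypersurface $F=0$ directly, using that its singular locus has projective dimension $2\na+\nb-7$, to obtain $N_0(p)=p^{2\na+\nb-1}+O(p^{2\na+\nb-3})$.

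For the cases where $p$ divides exactly one of $d,e,f,k,l$, the paper also proceeds more simply than you propose.  Rather than seeking cancellation between the two terms of $g$ via a Hensel argument, it bounds each term of \eqref{g4} separately: Lemma~\ref{lemR} already gives the $u=1$ term as $O_\eta(p^{-2+\eta})$, and for the $u=p$ term one bounds $R_M$ directly by fixing the constrained variable first and then counting solutions of a single congruence in the remaining block via \cite[Lemma~4.5]{BM}.  This yields $R_M\ll_\eta\Del^{2\na+\nb}p^{-3+\eta}$ without any appeal to Hensel lifting, and the factor $p$ in front gives exactly $p^{-2+\eta}$.  Note also that your case analysis does not clearly isolate the situation $p\mid e$, $p\mid l$, $p\nmid dfk$; the paper treats this separately as its Case~(iii), since Lemma~\ref{lemR} alone gives only $p^{-2+\eta}$ for the $R_M/\Del^{2\na+\nb}$ ratio there, and one again needs the extra saving from the congruence $F\equiv 0\bmod p$.
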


\begin{proof}
By using the trivial bound $R(d,e,f,k,l,u)\leq \Del^{2\na+\nb}$ and the multiplicativity of the function $g$, we may assume that all of $d,e,f,k,l$ are powers of $p$. Moreover, since only finitely many primes $p$ divide $M$, and our implicit constants may depend on $M$, we can assume that $M=1$. Recall that
\begin{equation}\label{g4}
g_1(d,e,f,k,l,p)=p\frac{R_M(d,e,f,k,l,p)}{\Del(p)^{2\na+\nb}}- \frac{R_M(d,e,f,k,l,1)}{\Del(1)^{2\na+\nb}}.
\end{equation}
If $p^3\mid [d^2,k][l^2,e^2,k]f^2$, then the bound from Lemma \ref{lemR} is already sufficient to establish the lemma. It remains to consider the following three cases.

\noindent{{\em Case (i): $p\nmid defkl$}}.
In this case \eqref{g4} becomes
$$g_1(1,1,1,1,1,p)= \frac{p}{p^{2\na+\nb}}\card\{\bfx\mmod p: F(\bfx)\equiv 0 \mmod p\}-1.$$
We shall count $\FF_p$ points on the hypersurface $F=0$, which has projective dimension $2n_1+n_2-2$ and 
 singular locus of  projective dimension  $2\na+\nb-7$. Hence  we obtain
$$g_1(1,1,1,1,1,p)= \frac{p}{p^{2\na+\nb}}(p^{2\na+\nb-1}+O(p^{2\na+\nb-3}))-1 \ll p^{-2},$$
which is satisfactory.

\noindent{{\em Case (ii): $p\| defkl$}}.
From Lemma \ref{lemR} it is clear that
$$\frac{R_M(d,e,f,k,l,1)}{\Del(1)^{2\na+\nb}}\ll_\eta p^{-2+\eta}.$$
We start with the case that $d=p$, observing that $p\nmid N_{L/\QQ}(\delta)$ since $(d,S)=1$.
In this case $R_1(p,1,1,1,1,p)$ is equal to
\begin{equation*}
 \card\{(\p,\q,\s)\mmod p^2: p^2\mid \nf_{\Ka/\Q}(\s),~ F(\p,\q,\s)\equiv 0\bmod{p}\}.
\end{equation*}
To estimate this, we first sum over all $\s$ modulo $p^2$ with $p^2\mid \nf_{\Ka/\Q}(\s)$, and then fix some arbitrary $\p$ modulo $p^2$. For such a tuple, one again needs to estimate the number of solutions to $\nf_{\Kb/\Q}(\q)\equiv \mu$ modulo $p$ for some residue $\mu$ modulo $p$. Two applications of \cite[Lemma 4.2]{BM} lead to the bound
$$
R(p,1,1,1,1,p)\ll_\eta p^{2(2\na+\nb)} p^{-2+\eta/2}p^{-1+\eta/2}\ll_\eta p^{2(2\na+\nb)-3+\eta}.
$$
Hence we deduce from  (\ref{g4}) that
$g_1(p,1,1,1,1,p)\ll_\eta p^{-2+\eta},$
as desired. The estimates for the other cases where $p$ divides exactly  one of $e,f,k,l$ are established in the same way.

\noindent{{\em Case (iii):  $p\mid l$ and $p\mid e$ and $p\nmid dfk$}}.
The same arguments as above leads to the estimate
$$R_1(1,p,1,1,p,p)\ll \card\{\x\mmod p^2: p^2|\nf_{\Ka/\Q}(\p),\ F(\x)\equiv 0\mmod p\}\ll_\eta p^{-3+\eta},$$
and
$R_1(1,p,1,1,p,1)\ll_\eta p^{-2+\eta}.$
Once combined with  (\ref{g4}), this is enough to establish the lemma.
\end{proof}

\subsection{Absolute convergence of the singular series}

We have now collected all the bounds that we need to show that one may complete the summations over $d,e,f,k,q$ in the definition of $N_\xi^{(1)}(G,H,V)$ in (\ref{defN1}). Observe that 
\begin{equation*}
\begin{split}
\sum_{\substack{d,e,f,k\leq V^\xi \\ (defk,\Sn)=1}}&|\mu(d)\mu(e)\mu(f)\mu(k) |\sum_{q\leq Q}\sum_{l=1}^\infty|\mu(l)||g_M(d,e,f,k,l,q)| \\ \ll & \prod_p \left(\sum_{\alp_1,\ldots,\alp_5\in\{0,1\}}\sum_{\bet=0}^\infty|g_{p^{\nu_p(M)}} (p^{\alp_1},\dots,p^{\alp_5},p^{\bet})|\right),
\end{split}
\end{equation*}
where the product over $p$ is taken over all prime numbers.
We define
$$ 
\grS:= \sum_{\substack{d,e,f,k\in\N \\ (defk,\Sn)=1}}\sum_{l=1}^\infty\mu(d)\mu(e)\mu(f)\mu(k)\mu(l) \sum_{q=1}^\infty g_M(d,e,f,k,l,q) .
$$
The following lemma shows that $\grS$ is indeed absolutely convergent.

\begin{lemma}
For any prime $p$ one has
$$\sum_{\alp_1,\ldots,\alp_5\in\{0,1\}}\sum_{\bet=0}^\infty|g_{p^{\nu_p(M)}} (p^{\alp_1},\dots,p^{\alp_5},p^{\bet})|= p^{-\nu_p(M)(2\na+\nb)}\big(1+O_\eta (p^{-2+\eta})\big),$$
where the implied constant depends only on $\eta$ and $M$.
\end{lemma}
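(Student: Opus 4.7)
The plan is to isolate the dominant contribution at the trivial tuple and bound all remaining terms using the $g$-estimates already established in Lemmas \ref{g1}, \ref{g2}, \ref{g3}, and \ref{lemR}. Set $m := \nu_p(M)$. For the tuple $(\alp_1,\ldots,\alp_5,\bet) = (0,\ldots,0,0)$ we have $d=e=f=k=l=q=1$, hence $\Del = p^m$, and no congruence modulo $u$ is imposed in \eqref{eq:chick}; the only surviving constraint is \eqref{eqnM}, which picks out exactly one residue class modulo $p^m$. Thus $R_{p^m}(1,1,1,1,1,1) = 1$ and $g_{p^m}(1,1,1,1,1,1) = p^{-m(2\na+\nb)}$, matching the main term on the right-hand side of the claim.

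It remains to show that the $31$ remaining tuples with $\bet = 0$, together with the $32$ tuples for each $\bet \geq 1$, contribute $O_\eta\bigl(p^{-m(2\na+\nb)} p^{-2+\eta}\bigr)$ in total. Since only finitely many choices of $(\alp_1,\ldots,\alp_5)$ exist, a uniform per-tuple bound suffices. When $\bet = 0$ and some $\alp_i = 1$, we have $u = q = 1$ so $g_{p^m}(d,e,f,k,l,1) = R_{p^m}(d,e,f,k,l,1)/\Del^{2\na+\nb}$; Lemma \ref{lemR} supplies $[d^2,k]^{1-\eta}[l^2,e^2,k]^{1-\eta}f^{2(1-\eta)} \geq p^{2-2\eta}$ in the denominator in every subcase, yielding $O_\eta(p^{-2+\eta})$. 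When $\bet = 1$, Lemma \ref{g3} yields $O_\eta(p^{-2+\eta})$ uniformly. When $\bet = 2$ with $p \mid def$, Lemma \ref{g2} gives $O_\eta(p^{-2+\eta})$; when $\bet = 2$ with $p \nmid def$ and $p \nmid M$, hypothesis \eqref{eqnalp} of Lemma \ref{g1} becomes $3 \geq 2\max\{\alp_4, \alp_4, 0, \alp_5, 0, 3/2\} = 3$ and is met, so Lemma \ref{g1} returns $O(p^{-3})$. Finally, for $\bet \geq 3$ and $p \nmid M$, every entry in the max of \eqref{eqnalp} is at most $2$, so the condition is satisfied and Lemma \ref{g1} gives $O\bigl(\bet^{2\na+\nb} p^{\bet - 3\lfloor\bet/2\rfloor - 2}\bigr)$; summing this geometric series over $\bet \geq 3$ produces $O(p^{-2})$.

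For the finitely many primes $p \mid M$, the implicit constants in the $g$-bounds are allowed to depend on $M$. There may be a finite initial range of $\bet$ (depending on $\nu_p(M)$) for which Lemma \ref{g1} is not directly applicable because \eqref{eqnalp} fails; in this range one bounds $|g_{p^m}(\cdots)|$ trivially by an $M$-dependent constant, so the corresponding finite contribution is absorbed into the error. The target bound $p^{-m(2\na+\nb)}\bigl(1 + O_\eta(p^{-2+\eta})\bigr)$ then holds for every prime $p$ with an implicit constant depending on $\eta$ and $M$.

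The main obstacle, such as it is, lies in checking condition \eqref{eqnalp} whenever Lemma \ref{g1} is invoked, most delicately at the boundary case $\bet = 2$, $p \nmid def$, where the inequality is tight. Beyond this, the argument is a careful but routine assembly of the per-case bounds from Lemmas \ref{lemR}, \ref{g1}, \ref{g2}, and \ref{g3}.
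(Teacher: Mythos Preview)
Your proof is correct and follows the same approach as the paper: compute the main term from the trivial tuple, then bound the remaining contributions case by case using Lemma~\ref{lemR} for $\bet=0$, Lemma~\ref{g3} for $\bet=1$, Lemmas~\ref{g2} and~\ref{g1} for $\bet=2$ according to whether $p\mid def$, and Lemma~\ref{g1} for $\bet\geq 3$. Your treatment of the primes $p\mid M$ (where \eqref{eqnalp} may fail for small $\bet$) is in fact slightly more explicit than the paper's, which silently absorbs this into the $M$-dependence of the constant.
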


\begin{proof}
It is easy to see that
$g_{p^{\nu_p(M)}}(1,\dots,1)= p^{-\nu_p(M)(2\na+\nb)}.$
Hence we need to show that the contribution from the remaining  summands is $O_\eta (p^{-2+\eta})$. 
For $\bet=0$ and $\sum_{i=1}^5 \alp_i\geq 1$, Lemma \ref{lemR} shows that
$$g_{p^{\nu_p(M)}} (p^{\alp_1},\dots,p^{\alp_5},1)\ll_\eta p^{-2+\eta}.$$
Furthermore, Lemma \ref{g3} implies that
$$ \sum_{\alp_1,\ldots,\alp_5\in\{0,1\}}|g_{p^{\nu_p(M)}} (p^{\alp_1},\dots,p^{\alp_5},p)|\ll_\eta p^{-2+\eta},$$
which corresponds to the terms with $\bet=1$. For $\bet=2$ and $\alp_1+\alp_2+\alp_3\geq 1$, one may use Lemma \ref{g2} to bound these terms by $O_\eta (p^{-2+\eta})$, and for $\bet=2$ and $\alp_1=\alp_2=\alp_3=0$, the same bound follows from Lemma \ref{g1}. We finally apply Lemma \ref{g1} to estimate the contribution from $\bet\geq 3$ by
\begin{equation*}
\begin{split}
  \sum_{\alp_1,\ldots,\alp_5\in\{0,1\}}\sum_{\bet= 3}^\infty|g_{p^{\nu_p(M)}} (p^{\alp_1},\dots,p^{\alp_5},p^{\bet})|
  &\ll \sum_{\bet=3}^\infty \bet^{2\na+\nb}p^{\bet-3\lfloor\frac{\bet}{2}\rfloor-2} \ll  p^{-2}.
\end{split}
\end{equation*}
This completes the proof of the lemma.
\end{proof}

We now factorise $\grS$ into a product of local densities. For $p\nmid \Sn$ we define 
$$\sig_p:=\sum_{\alp_1,\ldots,\alp_5\in\{0,1\}}(-1)^{\sum_{i=1}^5\alp_i}\sum_{\bet=0}^\infty g_{1}(p^{\alp_1},\dots,p^{\alp_5},p^{\bet}),$$
and for $p\mid \Sn$ we set
$$\sig_p:= \sum_{\alp\in\{0,1\}}(-1)^{\alp} \sum_{\bet=0}^\infty g_{p^{\nu_p(M)}}(1,1,1,1,p^{\alp},p^{\bet}).$$
We have 
$\grS=\prod_p \sig_p,$
since $\grS$ is absolutely convergent. 
In order to show that $\grS$ is positive, it will therefore be sufficient to show that each of the factors $\sig_p$ is positive. 

Note that $\sig_p=\lim_{T\rightarrow \infty} \sig_p(T),$
with
$$ \sig_p(T):= \sum_{\alp_1,\ldots,\alp_5\in\{0,1\}}(-1)^{\sum_{i=1}^5\alp_i}\sum_{\bet=0}^T g_{p^{\nu_p(M)}}(p^{\alp_1},\dots,p^{\alp_5},p^{\bet}),$$
in the case $p\nmid \Sn$, and in a similar way for $p\mid \Sn$. We rewrite $\sig_p(T)$ as
\begin{equation*}
\begin{split}
\sig_p(T)=~ &\sum_{\alp_1,\ldots,\alp_5\in\{0,1\}}(-1)^{\sum_{i=1}^5\alp_i}  \Bigg( \frac{1}{\Del(1)^{2\na+\nb}}R_{p^{\nu_p(M)}}(p^{\alp_1},\ldots, p^{\alp_5},1) \\   &\quad+ \sum_{\bet=1}^T  \frac{p^\bet}{\Del(p^\bet)^{2\na+\nb}}R_{p^{\nu_p(M)}}(p^{\alp_1},\ldots, p^{\alp_5},p^\bet)\\ &\quad -\sum_{\bet=1}^T \frac{p^{\bet-1}}{\Del(p^{\bet-1})^{2\na+\nb}}R_{p^{\nu_p(M)}}(p^{\alp_1},\ldots, p^{\alp_5},p^{\bet-1})\Bigg).
\end{split}
\end{equation*}
We recognise the summation in $\bet$ as a telescoping sum for each fixed vector $(\alp_1,\ldots, \alp_5)$. For $T$ sufficiently large, we therefore obtain
$$\sig_p(T)= \sum_{\alp_1,\ldots,\alp_5\in\{0,1\}}(-1)^{\sum_{i=1}^5\alp_i} \frac{p^T}{p^{T(2\na+\nb)}}R_{p^{\nu_p(M)}}(p^{\alp_1},\ldots, p^{\alp_5},p^T),$$
which we can  interpret as a normalised counting function modulo $p^T$. 
To be precise, for $p\mid \Sn$ and $T\geq \max\{2,\nu_p(M)\}$ define $\calR(p^T)$ to be the number of vectors $\x$ modulo $p^T$ that satisfy 
\begin{equation}\label{cong1}
  (\p,\q,\s)\equiv (\v^{(M)},\w^{(M)},\u^{(M)})\mmod p^{\nu_p(M)},\quad \quad p\nmid \nf_{\Ka/L}(\p),
\end{equation}
and
$$F(\x)\equiv 0 \mmod p^T.$$
For $p\nmid \Sn$ we define $\calR(p^T)$ in the same way, but with the additional restrictions that
\begin{equation}\label{cong2}
\begin{split}
&p^2\nmid N_{L/\Q}(\del \nf_{\Ka/L}(\s)),\quad p^2\nmid N_{L/\Q}(\nf_{\Ka/L}(\p)D_L),\quad p^2\nmid c \nf_{\Kb/\Q}(\q), \\ & p\nmid \left(N_{L/\Q}(\del \nf_{\Ka/L}(\s)) , N_{L/\Q}(\nf_{\Ka/L}(\p)D_L)\right).
\end{split}
\end{equation}
Then for $T\geq \max\{2,\nu_p(M)\}$
one has the expression
$$\sig_p(T)= \frac{p^T}{p^{T(2\na+\nb)}}\calR (p^T).$$

\subsection{Completion of the proof}

 We now have everything in place to show that 
 \begin{equation}\label{eq:pea}
\grS >0.
\end{equation}
Our work in the previous section shows that it suffices to show  that $\sig_p>0$ for every prime $p$. It is clearly 
sufficient to find a smooth $p$-adic solution to $F(\x)=0$, that satisfies the additional congruence conditions (\ref{cong1}) in the case $p\mid \Sn$, and both (\ref{cong1}) and (\ref{cong2}) for $p\nmid \Sn$. 
For $p\nmid M$, it is sufficient to find a non-singular solution to $F(\x)=0$ over $\F_p$, such that $p\nmid \nf_{\Ka/L}(\p)$ and $p$ divides none of the expressions in (\ref{cong2}).

As in  \cite[\S 9]{BHB}, we observe that the equation $F(\x)=0$ has $$
p^{2\na+\nb-1}+O(p^{2\na+\nb-3})
$$ solutions over $\F_p$. The number of solutions with any of the  additional restrictions $p\mid \nf_{\Ka/L}(\p)$,  or $p$ divides one of the expressions in (\ref{cong2}), or $\x$ is a singular point of $F(\x)=0$, is bounded by $O(p^{2\na+\nb-2})$. 
Hence, for $p\geq p_0$, there is a non-singular solution to $F(\x)=0$ over $\F_p$, such that $p\nmid \nf_{\Ka/L}(\p)$ and $p$ divides none of the expressions in (\ref{cong2}). This solution may be lifted via Hensel's lemma and shows that $\sig_p>0$ for $p\geq p_0$. 

We may assume without loss of generality that all primes $p<p_0$ are contained in $S_f\cup \{v_0\}$. For $p\mid M$, we recall that we have set $\v^{(M)}=(1,0,\ldots, 0)$. By Lemma \ref{lem:find-yw}, for any prime $p\in  S_f$, we are given a solution $(\y_p,\w_p)\in \cW(\Z_p)$ with $(\y_p,\w_p)\equiv (\y^{(M)},\w^{(M)})$ modulo $M$. Since $\cW$ is non-singular, this is already enough to establish $\sig_p>0$ for $p\in  S_f$. It remains to consider the prime $p_0$ corresponding to $v_0$ if this is a finite place. In the beginning of the argument we may assume that we are given some $p_0$-adic solution to $F(\x)=0$ with $\p=(1,0,\ldots,0)$. After renormalising we may even assume that this is an integral $p_0$-adic solution. Again since $W$ is smooth, this is sufficient to show that $\sig_{p_0}>0$, which thereby concludes the proof of \eqref{eq:pea}.

We now turn our attention to the singular integral \eqref{eq:thrush}. 
For $G$ sufficiently large, the domains $\calV$ and $\calW$ coincide with those defined in \cite{BHB}. Moreover, our singular integral $\sig_\infty$ is up to two fixed normalisation constants ($M^\na$ and $c$) the same as that defined in \cite{BHB}. Hence their arguments apply and show that 
$\sig_\infty\gg G^{1-2\na-\nb}H^{\na\nb}V_0^{2\na\nb}.$
The lower bounds for $\grS$ and $\sig_\infty$ in combination with (\ref{defN1}) and the fact that $\grS$ is absolutely convergent, now show that
\begin{equation}\label{eqnNdel2}
\calN_\xi^{(1)}(G,H,V)\gg G^{1-2\na-\nb}H^{\na\nb}V_0^{2\na\nb}.
\end{equation}
We now set $G=\log V$ and $ H=V^{\frac{1}{\na\nb^2(\na+16)}}$. 
Recalling \eqref{eq:UVW}, these choices clearly imply that $H^{\na\nb/2}\leq V$ and $H^{\na\nb/2-\na/2}<V_0^{\na}$; 
i.e. the assumptions of Lemma \ref{lem19} are satisfied. We choose $Q$ a fixed small power of $H$ such that $Q^6<H^{\na\nb/4}$ and (\ref{eqnQ1}) holds. 
Take $\xi$ sufficiently small. Then we may combine  \eqref{eqnNdel} with \eqref{eqn4.10b} and the lower bound  \eqref{eqnNdel2} to complete the proof of Theorem~\ref{thm:2}.

\end{document}